\theoremstyle{nonumberplain}
\theoremstyle{plain}
\theoremstyle{plain}
\newtheorem{lemma}{Lemma}[section]
\newtheorem{corollary}{Corollary}[section]
\newtheorem{proposition}{Proposition}[section]
\newtheorem{definition}{Definition}[section]
\newtheorem{theorem}{Theorem}[section]
\theoremstyle{plain}
\theoremstyle{nonumberplain}
\theoremstyle{nonumberplain}
\newtheorem{proof}{Proof}
\renewcommand*{\@seccntformat}[1]{\csname the#1\endcsname.\quad}
\newcommand{\N}{\mathbb{N}}
\newcommand{\R}{\mathbb{R}}
\newcommand{\C}{\mathbb{C}}
\newcommand{\coloneqq}{\mathrel{\mathop:}=}
\providecommand{\abs}[1]{\lvert#1\rvert}
\providecommand{\norm}[1]{\lVert#1\rVert}
\renewcommand{\Re}{\text{Re}\,}
\providecommand{\chiup}{\raisebox{0.3ex}{$\chi$}}
\def\moverlay{\mathpalette\mov@rlay}
\def\mov@rlay#1#2{\leavevmode\vtop{%
   \baselineskip\z@skip \lineskiplimit-\maxdimen
   \ialign{\hfil$\m@th#1##$\hfil\cr#2\crcr}}}
\newcommand{\charfusion}[3][\mathord]{
    #1{\ifx#1\mathop\vphantom{#2}\fi
        \mathpalette\mov@rlay{#2\cr#3}
      }
    \ifx#1\mathop\expandafter\displaylimits\fi}
\newcommand{\bigcupdot}{\charfusion[\mathop]{\bigcup}{\cdot}}
\begin{document}

%
%
\title{On compacts possessing strictly plurisubharmonic functions}
\author{Nikolay Shcherbina}
\maketitle
\nopagebreak

\vspace{0,2cm}
{\centerline {\small {\it Dedicated to the memory of my teacher Anatoli Georgievich Vitushkin (1931 - 2004).}}}

%
%

\vspace{0,5cm}
\small\noindent{\bf Abstract.}
We give a geometric condition on a compact subset of a complex manifold which is necessary and sufficient for the existence of a smooth strictly plurisubharmonic function defined in a neighbourhood of this set.

%
%
\renewcommand{\thefootnote}{}\footnote{2010 \textit{Mathematics Subject Classification.} Primary 32U05; Secondary 32Q99.}\footnote{\textit{Key words and phrases.} Strictly plurisubharmonic functions, 1-pseudoconcave sets.}

%
%

%
%
%
%

%
%
%
%
%
\section{Introduction}

Plurisubharmonic functions play a central role in complex analysis. Many important and classical results are formulated in terms of these functions, in particular, using the existence of strictly plurisubharmonic functions on a given manifold. For example, Grauert \cite{Grauert58} characterized Stein manifolds by existence of smooth strictly plurisubharmonic exhaustion functions. This result was generalized to the case of complex spaces by Narasimhan in \cite{Narasimhan61} and \cite{Narasimhan62}. Sibony in \cite[Theorem 3, p. 362]{Sibony81} proved that the existence of a bounded smooth strictly plurisubharmonic function is sufficient for Kobayashi hyperbolicity of a complex manifold. A similar criterion for the existence of Bergman metric on Stein manifolds was established by Chen-Zhang in \cite[Theorem 1, p. 2998, and observation 2, p. 3002]{ChenZhang02}. Recently Poletsky \cite{Poletsky20} used manifolds possessing bounded smooth strictly plurisubharmonic functions to develop further the theory of pluricomplex Green functions.

In the present paper we study a question of the existence of smooth strictly plurisubharmonic functions on a given compact set. Smoothness and strict plurisubharmonicity of such functions can be defined as follows.

\begin{definition} \label{def 0} Let $\mathcal{K}$ be a compact subset of a complex manifold $\mathcal{M}$. We say that a function $\phi$ defined on $\mathcal{K}$ is smooth and strictly plurisubharmonic if there is a neighbourhood $\mathfrak{A}$ of $\mathcal{K}$ in $\mathcal{M}$ and a smooth strictly plurisubharmonic function $\varphi$ on $\mathfrak{A}$ such that $\varphi_{}|_{\mathcal{K}} = \phi$.
\end{definition}

The next result gives a complete geometric characterization of compacts possessing such functions.

\vspace{0,3cm}
\noindent \textbf{Main Theorem.} {\it Let $\mathcal{K}$ be a compact subset of a complex manifold. Then $\mathcal{K}$ possesses a smooth strictly plurisubharmonic function if and only if $\mathcal{K}$ does not have 1-pseudoconcave subsets.}

1-pseudoconcavity here is understood in the sense of Rothstein {\rm \cite{Rothstein55}}. By a complex manifold we will always mean a manifold of pure complex dimension which has a Hausdorff topology with a countable basis.

The paper is organized as follows. In Section 2 we recall the basic definitions and the main properties of pseudoconcave sets as well as the construction of a special plurisubharmonic function given in \cite[Theorem 3.1, part 1]{HarzShcherbinaTomassini17}. In Section 3 we provide a constructive way to define the maximal $1$-pseudoconcave subset of a given compact set. In Section 4 we prove the Main Theorem and one of its corollaries. Finally, in Section 5 we give some applications of our results and discuss their relation to the other topics.

\vspace{0,2cm}
\noindent 
{\bf Remark.} When our paper was already posted on arXiv and submitted to a journal, Nessim Sibony
informed us that an alternative proof of the `if'
part of our Main Theorem can be derived from \cite{Sibony2018} and \cite{FornaessSibony95}.
Namely, if there are no strictly psh functions on a compact set, then the duality
argument in the proof of \cite[Proposition 2.1]{Sibony2018} can be extended to produce a
non-trivial $dd^c$-closed positive current of bidimension $(1,1)$ supported in that
compact set. (A similar duality argument was earlier used in \cite[Theorem (38)]{HarveyLawson83}.)
The support of such a current is a $1$-pseudoconvex set by \cite[Corollary 2.6]{FornaessSibony95}.
We are grateful to Nessim Sibony for this valuable observation but believe that our
direct geometric proof is of independent interest.

\section{Preliminaries} \label{sec_preliminaries}

We recall first the notion of {\em $1$-pseudoconvexity} in the sense of Rothstein. Let $\Delta^n \coloneqq \{z \in \C^n : \norm{z}_\infty < 1 \}$, where $\norm{z}_\infty = \max_{1 \le j \le n} \abs{z_j}$. An $(1, n - 1)$ Hartogs figure $H$ is a set of the form
\[H = \big\{(z_1, \cdots, z_n) \in \Delta^1 \times \Delta^{n-1} :  |z_1| < r_1 \text{ or } \norm{(z_2, \cdots, z_n)}_\infty > r_2 \big\}, \]
where $0 < r_1,r_2 < 1$, and we write $\hat{H} \coloneqq \Delta^n$. 

\begin{definition} \label{def 1} Let $\mathcal{M}$ be a complex manifold of dimension $n$. An open set $\Omega \subset \mathcal{M}$ is called  $1$-pseudoconvex in $\mathcal{M}$ if it satisfies the Kontinuit\"atssatz with respect to $(n-1)$-polydiscs in $\mathcal{M}$, i.e., if for every $(1, n - 1)$ Hartogs figure and every injective holomorphic mapping $\Phi \colon \hat{H} \to \mathcal{M}$ such that $\Phi(H) \subset \Omega$ one has $\Phi(\hat{H}) \subset \Omega$.
\end{definition}

This definition was introduced by Rothstein \cite{Rothstein55} in a more general setting of $q$-pseudoconvex sets for every $q = 1, 2, \ldots, n-1$.  We restrict our definition to the special case $q = 1$, since in the present paper we only need the notion of $1$-pseudoconvexity. 

Another way to define $1$-pseudoconvexity can be described as follows. For an arbitrary $r \in (0 ,1)$ we consider a {\em spherical hat}

\[ {\mathbb S}^n_r := \{z = (z_1, \ldots, z_n) \in {\mathbb C}^n : \|z \|^2 = 1, \,\, x_1 := \Re z_1 \geq r\} \]

\noindent
and a {\em filled spherical hat}

\[ {\hat {\mathbb S}}^n_r := \{z = (z_1, \ldots, z_n) \in {\mathbb C}^n : \|z \|^2 \leq 1, \,\, x_1 := \Re z_1 \geq r\}. \]

\begin{definition} \label{def 2}  Let $\mathcal{M}$ be a complex manifold of dimension $n$. An open set $\Omega \subset \mathcal{M}$ is called  $1$-pseudoconvex in $\mathcal{M}$ if for every $r \in (0 ,1)$, every neighbourhood $U := U({\hat {\mathbb S}}^n_r) \subset {\mathbb C}^n$ of the filled spherical hat ${\hat {\mathbb S}}^n_r$ and every injective holomorphic mapping $\Phi \colon U \to \mathcal{M}$ such that $\Phi({\mathbb S}^n_r) \subset \Omega$ one has $\Phi({\hat {\mathbb S}}^n_r) \subset \Omega$. 
\end{definition}

The next statement shows that the above definitions give us the same notion.

\begin{proposition} \label{thm_lmp}
	Let $\mathcal{M}$ be a complex manifold and $\Omega \subset \mathcal{M}$ be an open set. Then the following assertions are equivalent:
	\begin{enumerate}
		\item[$(1)$] $\Omega$ is $1$-pseudoconvex in the sense of Definition \ref{def 1}.
		\item[$(2)$] $\Omega$ is $1$-pseudoconvex in the sense of Definition \ref{def 2}.
	\end{enumerate}
	\end{proposition}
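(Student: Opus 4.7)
The plan is to prove the two implications $(1)\Rightarrow(2)$ and $(2)\Rightarrow(1)$ separately. In each direction I would construct, for an arbitrary configuration witnessing one definition, an explicit biholomorphic embedding that realizes it inside the test configuration of the other definition, and then appeal to the other definition applied to the composition with $\Phi$.

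For the direction $(2)\Rightarrow(1)$: given a Hartogs figure $H\subset\hat H=\Delta^n$ with parameters $r_1,r_2$ and an injective holomorphic $\Phi:\hat H\to\mathcal M$ with $\Phi(H)\subset\Omega$, I fix an arbitrary $z^\ast\in\hat H\setminus H$ (so $\abs{z_1^\ast}\ge r_1$ and $\norm{(z_2^\ast,\dots,z_n^\ast)}_\infty\le r_2$) and aim to construct an injective holomorphic map $\tilde\Psi$ from a neighborhood of some filled spherical hat $\hat{\mathbb S}^n_\rho$ into $\hat H$ with $\tilde\Psi(\mathbb S^n_\rho)\subset H$ and $z^\ast\in\tilde\Psi(\hat{\mathbb S}^n_\rho)$. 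Applying Definition~\ref{def 2} to the composition $\Phi\circ\tilde\Psi$ then yields $\Phi(z^\ast)\in\Omega$. The geometric idea is to take a small ball whose spherical cap ``protrudes'' into the shell region $\{\norm{(z_2,\dots,z_n)}_\infty>r_2\}$ of $H$, using a rotation placing the cap's axis into a $z_j$-direction with $j\ge 2$ combined with a suitable dilation; since such a linear rotation+dilation cannot simultaneously fit $z^\ast$ into the filled cap (a direct triangle-inequality computation shows $\abs{z_1^\ast}<r_1$ would be forced), a polynomial deformation of the form $(w_1,w')\mapsto(w_1,w'+P(w_1))$ is applied to bend the image of $\mathbb S^n_\rho$ into the shell while keeping $z^\ast$ inside $\tilde\Psi(\hat{\mathbb S}^n_\rho)$.

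For the direction $(1)\Rightarrow(2)$: I argue by a connectedness/sweeping argument on $\hat{\mathbb S}^n_r$. Given $r,U,\Phi$ as in Definition~\ref{def 2} with $\Phi(\mathbb S^n_r)\subset\Omega$, set $E=\{p\in\hat{\mathbb S}^n_r:\Phi(p)\in\Omega\}$. Openness of $E$ in $\hat{\mathbb S}^n_r$ is immediate from openness of $\Omega$; nonemptiness follows because compactness of $\mathbb S^n_r$ together with continuity of $\Phi$ yields an open relative neighborhood of $\mathbb S^n_r$ inside $E$. For closedness, given $p_0\in\overline E\cap\hat{\mathbb S}^n_r$, I construct a $(1,n-1)$ Hartogs figure $H$ and an injective holomorphic map $\Psi:\hat H=\Delta^n\to U$ with $p_0\in\Psi(\hat H)$ and $\Psi(H)\subset\Phi^{-1}(\Omega)\cap U$. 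Concretely, $\Psi$ is a small rotated and translated polydisc centered near $p_0$, oriented so that the Hartogs part points toward a nearby point of $E$, and bent by a polynomial change of the form $(w_1,w')\mapsto(w_1,w'+P(w_1))$ so that $\Psi(H)$ remains inside $\Phi^{-1}(\Omega)$ while $p_0\in\Psi(\hat H)$. Definition~\ref{def 1} applied to $\Phi\circ\Psi$ then gives $p_0\in E$, and connectedness of $\hat{\mathbb S}^n_r$ forces $E=\hat{\mathbb S}^n_r$.

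The main technical obstacle in both directions is the explicit construction of the biholomorphic embedding, since the flat product structure of a polydisc and the curved shape of a spherical cap differ significantly. In each case one must verify by direct geometric estimates that an appropriately scaled and polynomially bent copy of one configuration fits inside the other with the test part landing in the prescribed set. The polynomial bending step is especially delicate in the $(1)\Rightarrow(2)$ direction, where it is essential for matching the flat Hartogs part of a polydisc to the curved neighborhood of $\mathbb S^n_r$, and the choice of sizes has to be coordinated with the distance from $p_0$ to the previously-verified good region.
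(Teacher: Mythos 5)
Both directions of your plan reduce the equivalence to producing a \emph{single} test configuration of one definition inside a \emph{single} test configuration of the other, and in both cases that is precisely the step that fails: the equivalence is genuinely a continuity-method statement, and the paper proves each implication by sweeping a one-parameter family and working only at a first-touching point, where the bad set sits on one side of a strictly curved hypersurface. In your $(2)\Rightarrow(1)$ direction you need, for an \emph{arbitrary} $z^\ast\in\hat H\setminus H$, one map $\tilde\Psi$ with $\tilde\Psi(\mathbb{S}^n_\rho)\subset H$ and $z^\ast\in\tilde\Psi(\hat{\mathbb{S}}^n_\rho)$; for the maps you describe (rotation and dilation followed by a shear $(w_1,w')\mapsto(w_1,w'+P(w_1))$) this is provably impossible. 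Write $z^\ast=\tilde\Psi(u)$ with $u=(u_1,u')$; since $z^\ast\notin H$ one has $\abs{u_1}<1$, so the whole fibre sphere $\{u_1\}\times\{\norm{w'}^2=1-\abs{u_1}^2\}$ lies in the cap. If the axis of the hat is sent into a $z_j$-direction with $j\ge 2$, that fibre sphere is mapped into the affine hyperplane $\{z'=z'^\ast\}$ with $\norm{z'^\ast}_\infty\le r_2$, so membership in $H$ forces $\abs{z_1}<r_1$ on its image, and the maximum principle on the fibre ball gives $\abs{z_1^\ast}<r_1$ --- a contradiction. If instead the axis stays along $z_1$, say $\tilde\Psi(w_1,w_2)=(\alpha w_1+\beta,\gamma w_2+P(w_1))$ in dimension $2$, set $E:=\{w_1:\abs{w_1}\le 1,\ \Re w_1\ge\rho,\ \abs{\alpha w_1+\beta}\ge r_1\}$; over each $w_1\in E$ the cap fibre must land in $\{\abs{z_2}>r_2\}$, which splits $E$ into two disjoint relatively open pieces (fibre circle enclosing $\overline{\Delta_{r_2}(0)}$, respectively disjoint from it); the point $u_1$ lies in the first piece, every point of $E$ with $\abs{w_1}=1$ lies in the second, and an elementary convexity argument (push radially away from the centre of the disc $\{\abs{\alpha w_1+\beta}<r_1\}$, then slide along the chord to a corner) shows every component of $E$ meets $\{\abs{w_1}=1\}$ --- again a contradiction. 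So the one-shot capture fails for the construction you outline, and there is no reason to expect any single spherical hat to reach a point deep in $\hat H\setminus H$. The paper instead exhausts by the sublevel sets $G_C$ of $-\log\abs{z_1}+\varepsilon\norm{z}^2$, finds the critical $C_0$ where $\mathcal{M}\setminus\Omega$ touches the strictly pseudoconvex part of $bG_{C_0}$ from inside, and only \emph{there} fits a slightly translated spherical hat; the local model at a first-touching point is easy precisely because the obstruction lies outside a strictly convex domain.

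Your $(1)\Rightarrow(2)$ direction has the same defect in a different guise. The set $E=\{p\in\hat{\mathbb{S}}^n_r:\Phi(p)\in\Omega\}$ is indeed open and nonempty, but closedness requires placing a Hartogs figure whose $H$-part lies in $\Phi^{-1}(\Omega)$ and whose hull contains $p_0\in\overline{E}$. The $H$-part of a $(1,n-1)$ Hartogs figure contains the full slab $\{\abs{z_1}<r_1\}\times\Delta^{n-1}$, an open set of definite size relative to $\hat H$; near $p_0$ the only parts of $\Phi^{-1}(\Omega)$ you control are balls around points of $E$ whose radii may shrink arbitrarily fast as those points approach $p_0$ (plus a neighbourhood of the possibly distant cap). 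Nothing prevents $\Omega$ from thinning out so that no Hartogs figure at any scale fits its slab into the good region while its hull reaches $p_0$; open-closed arguments for hulls always founder on exactly this lack of uniformity, and this is why such equivalences are never proved that way. The paper's remedy is again a first-touching normalization: it sweeps the spheres $\mathbb{S}^n_{r,C}$ through $\partial\mathbb{S}^n_r$, and at the minimal $C_0$ for which the image meets $\mathcal{M}\setminus\Omega$ the bad set lies entirely outside an embedded closed ball and touches it at one point, so the good region contains the complement of that ball --- an open set of definite shape into which a small Hartogs figure capturing the touching point visibly fits. Without such a normalization neither of your embeddings can be constructed, so as it stands the proposal establishes neither implication.
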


\begin{proof}
	To prove the implication $(1) \Rightarrow (2)$ we argue by contradiction and assume that there is a domain $\Omega$ which is $1$-pseudoconvex in the sense of Definition \ref{def 1}, but not $1$-pseudoconvex in the sense of Definition \ref{def 2}. Then, in view of  Definition \ref{def 2}, and after the substitution of $r$ by $r - \varepsilon$ with $\varepsilon > 0$ small enough if necessary, we can assume that $\Phi({\mathbb S}^n_r) \subset \Omega$, but $\Phi{\big(}{\rm Int}({\hat {\mathbb S}}^n_r){\big)} \cap ({\mathcal{M}} \setminus \Omega) \neq \emptyset$, where by ${\rm Int}({\hat {\mathbb S}}^n_r)$ we will denote the interior of the set ${\hat {\mathbb S}}^n_r$. Consider now for each $C \geq 0$ a slightly more general spherical hat

\[ {\mathbb S}^n_{r, C} := \{z = (z_1, \ldots, z_n) \in {\mathbb C}^n : |z_1 + C|^2 + |z_2|^2 + \cdots + |z_n|^2 = C^2 +2rC + 1, \,\, x_1 \geq  r\} \]

\noindent
and a corresponding filled spherical hat

\[ {\hat {\mathbb S}}^n_{r, C} := \{z = (z_1, \ldots, z_n) \in {\mathbb C}^n : |z_1 + C|^2 + |z_2|^2 + \cdots + |z_n|^2 \leq C^2 +2rC + 1, \,\, x_1 \geq  r\} \]

\noindent
and observe that the spherical hats ${\mathbb S}^n_{r, C}$ depend continuously on the parameter $C$, they all contain the "boundary" 
\[ {\partial}{\mathbb S}^n_r := \{z = (z_1, \ldots, z_n) \in {\mathbb C}^n : \|z \|^2 = 1, \,\, x_1 = r\} \]
 of the spherical hat ${\mathbb S}^n_r$ and, moreover, that $\bigcap_{C > 0} {\rm Int}({\hat {\mathbb S}}^n_{r, C})  = \emptyset$. Hence, there is 
\[ C_0 := \min\{C : \Phi{\big(}{\mathbb S}^n_{r, C}{\big)} \cap ({\mathcal{M}} \setminus \Omega) \neq \emptyset \} > 0. \]

\noindent
If $z_0 \in {\mathbb S}^n_{r, C_0}$ is a point such that $\Phi(z_0) \in {\mathcal{M}} \setminus \Omega$, then we can consider an affine transformation $\mathbb L$ of ${\mathbb C}^n$ which sends the sphere ${\mathbb S}^n := \{z = (z_1, \ldots, z_n) \in {\mathbb C}^n : \|z \|^2 = 1\}$ to the sphere ${\mathbb S}^n_{C_0} := \{z = (z_1, \ldots, z_n) \in {\mathbb C}^n : |z_1 + C_0|^2 + |z_2|^2 + \cdots + |z_n|^2 = C^2_0 +2r{C_0} + 1\}$, the origin $O$ to the point $(- C_0, 0, \cdots, 0)$ and the point $(1, 0, \cdots, 0)$ to the point $z_0$. If now for $r' < 1$ close enough to $1$ we consider the spherical hat ${\mathbb S}^n_{r'}$, the neighbourhood ${\mathbb L}^{- 1}{\big(}U({\hat {\mathbb S}}^n_r){\big)}$ of the  filled spherical hat ${\hat {\mathbb S}}^n_{r'}$ and the injective holomorphic mapping $\Phi \circ \mathbb L \colon {\mathbb L}^{- 1}{\big(}U({\hat {\mathbb S}}^n_r){\big)} \to \mathcal{M}$, then, by construction, we will get that ${\big(}\Phi \circ \mathbb L{\big)}(1, 0, \cdots, 0) \in {\mathcal{M}} \setminus \Omega$ and for $\delta > 0$ small enough we will also get that ${\big(}\Phi\circ \mathbb L{\big)}{\big(}{U_{2\delta}({\mathbb S}}^n_{r'}) \setminus {\overline {\mathbb B}}^n_1(0){\big)} \subset \Omega$, where ${U_{2\delta}({\mathbb S}}^n_{r'})$ is the $2\delta$-neighbourhood of the set ${\mathbb S}^n_{r'}$ in ${\mathbb C}^n$ and ${\mathbb B}^n_1(0)$ is the ball  in ${\mathbb C}^n$ of radius $1$ with center at the origin. Hence, if for ${\delta}' > \delta$ close enough to $\delta$, $r_1 \in (0 ,{\delta}')$ close enough to $0$ and $r_2 \in (0 ,\delta)$ close enough to $\delta$ we consider the Hartogs figure

\[H_{{\delta}',\delta,r_1 ,r_2} = \big\{(z_1, \ldots, z_n) \in \Delta^1_{{\delta}'}(1 + \delta) \times \Delta^{n-1}_{\delta}(0) :  |z_1 - (1 + \delta)| < r_1 \text{ or } \norm{(z_2, \cdots, z_n)}_\infty > r_2 \big\} \]

\noindent 
and write

\[{\hat H}_{{\delta}',\delta,r_1 ,r_2} :=  \Delta^1_{{\delta}'}(1 + \delta) \times \Delta^{n-1}_{\delta}(0), \]

\noindent
where $\Delta_s(a) := \{z \in {\mathbb C} : |z - a| < s\}$, then we will get that $H_{{\delta}',\delta,r_1 ,r_2} \subset {\mathbb C}^n \setminus {\overline {\mathbb B}}^n_1(0)$, but $(1 , 0 , \cdots , 0) \in {\hat H}_{{\delta}',\delta,r_1 ,r_2} \cap {\overline {\mathbb B}}^n_1(0)$. If we now define an affine  change of coordinates ${\mathbb L}'$ in $\mathbb{C}^n$ by 

\[z_1 \to {\delta}' z_1 + (1 + \delta) =: z'_1, \, z_j \to \delta z_j  =: z'_j \text{ for } j = 2, 3, \cdots , n, \]

\noindent
then the map $\Phi \circ {\mathbb L} \circ {\mathbb L}' : \hat H \to \mathcal M$ will give us the desired contradiction to the assumption on $1$-pseudoconvexity of the domain $\Omega$ in the sense of Definition \ref{def 1}.

To prove the implication $(2) \Rightarrow (1)$ we will follow the argument used in the proof of Theorem 3.2 in \cite{HarzShcherbinaTomassini17} and assume, to get a contradiction, that there is a domain $\Omega$ which is $1$-pseudoconvex in the sense of Definition \ref{def 2}, but not $1$-pseudoconvex in the sense of Definition \ref{def 1}. Then for some $0 < r_1,r_2 < 1$ there exists a $(1, n - 1)$ Hartogs figure $H = \big\{(z_1 , \cdots ,z_n) \in \Delta \times \Delta^{n-1} :  |z_1| < r_1 \text{ or } \norm{(z_2, \cdots, z_n)}_\infty > r_2 \big\}$ and an injective holomorphic mapping $\Phi \colon \hat{H} \to \mathcal{M}$ such that $\Phi(H) \subset \Omega$ but $\Phi(\hat{H}) \cap (\mathcal{M} \setminus \Omega) \neq \emptyset$. For small $\varepsilon > 0$, let $\varphi \colon \C^\ast_{z_1} \times \C^{n-1}_{(z_2, \cdots, z_n)} \to \R$ be the smooth strictly plurisubharmonic function defined by $\varphi(z) \coloneqq -\log\abs{z_1} + \varepsilon \norm{z}^2 $, and for each $C \in \R$ let $G_C$ denote the domain $G_C \coloneqq \big\{\zeta \in \Phi(\hat{H}) : (\varphi \circ \Phi^{-1})(\zeta) < C\big\}$. Since for $C$ large enough the set $\hat{H} \cap \{z \in \C^n : \varphi < C\}$ contains $\hat{H} \setminus H$, and since $\Phi(\hat{H}) \cap (\mathcal{M} \setminus \Omega) \subset \Phi(\hat{H} \setminus H)$, we know that for $C$ large enough $\Phi(\hat{H}) \cap (\mathcal{M} \setminus \Omega) \subset G_C$. Let $C_0 \coloneqq \inf \{C \in \R : \Phi(\hat{H}) \cap (\mathcal{M} \setminus \Omega) \subset G_C\}$. Then the set $\mathcal{M} \setminus \Omega$ "touches" the strictly pseudoconvex part $M \coloneqq bG_{C_0} \cap \Phi(\hat{H})$ of the boundary $bG_{C_0}$ of $G_{C_0}$ "from inside". That is $\Phi(\hat{H}) \cap (\mathcal{M} \setminus \Omega) \subset {\bar G}_{C_0}$, there is a point $\zeta_0 \in \Phi(\hat{H}) \cap (\mathcal{M} \setminus \Omega) \cap bG_{C_0}$ and the domain $G_{C_0}$ is strictly pseudoconvex near the point $\zeta_0$. It follows from strict pseudoconvexity of the domain $G_{C_0}$ near $\zeta_0$ that there is a bounded strictly convex domain $G_0 \subset \mathbb{C}^n$, a point $z_0 \in bG_0$, a neighbourhood $U_0$ of $z_0$ and an injective holomorphic mapping $\Psi : U_0 \to \mathcal M$ such that $\Psi {\big(}U_0 \cap G_0\big) = \Psi{\big(}U_0{\big)} \cap G_{C_0}$ and $\Psi(z_0) = \zeta_0$. Strict convexity of the domain $G_0 \subset \mathbb{C}^n$ implies that we can find a ball ${\mathbb B}^n_R(\tilde C)$  centered at a point $\tilde C \in \mathbb{C}^n$ with radius $R > 0$ which contains the domain $G_0$ and such that ${b{\mathbb B}}^n_R(\tilde C) \cap {\bar{G_0}} = z_0$. If we take an affine  change of coordinates $\mathbb L$ in $\mathbb{C}^n$ which sends the ball ${\mathbb B}^n_1(0)$ to the ball ${\mathbb B}^n_R(\tilde C)$, the origin $O$ to the point $\tilde C$ and the point $(1, 0, \cdots , 0)$ to the point $z_0$ and then consider a translation ${\mathbb E}_{\delta} := {\mathbb E} + \delta (1, 0, \cdots , 0)$, where $\mathbb E$ is the identity map of $\mathbb{C}^n$ and $\delta > 0$ is small enough, we will see from our construction that the injective holomorphic mapping $\Psi \circ \mathbb L \circ {\mathbb E}_{\delta}$ applied to a neighbourhood of the filled spherical hat ${\hat {\mathbb S}}^n_r$ with $0 < r < 1$ close enough to $1$ will give us a contradiction to the assumption of $1$-pseudoconvexity of the domain $\Omega$ in the sense of Definition \ref{def 2}.
\end{proof}

Now we recall the definition of {\em $1$-pseudoconcavity} for closed sets.

\begin{definition} \label{def 3} Let $\mathcal{M}$ be a complex manifold and $A \subset \mathcal{M}$ be a closed set. Then $A$ is called  $1$-pseudoconcave in $\mathcal{M}$ if $\mathcal{M} \setminus A$ is $1$-pseudoconvex in $\mathcal{M}$. 
\end{definition}

Note that more equivalent descriptions of $1$-pseudoconvex sets are known. For example, from Theorems 4.2 and 5.1 of S{\l}odkowski \cite{Slodkowski86} it follows, in particular, that a nonempty relatively closed subset $A$ of an open set $V \subset \C^n$ is $1$-pseudoconcave in $V$ if and only if plurisubharmonic functions have the local maximum property on $A$. We will need here an analogous statement in a more general setting of complex manifold.

\begin{proposition} \label{thm_lmp}
	Let $\mathcal{M}$ be a complex manifold and $A \subset \mathcal{M}$ be a closed set. Then the following assertions are equivalent:
	\begin{enumerate}
		\item[$(1)$] For every $\zeta \in A$, there exists a neighbourhood $V \subset \mathcal{M}$ of $\zeta$ such that $A \cap V$ is $1$-pseudoconcave in $V$.
		\item[$(2)$] A is $1$-pseudoconcave in $\mathcal{M}$.
		\item[$(3)$] For every $\zeta \in A$, there exists a neighbourhood $V \subset \mathcal{M}$ of $\zeta$ such that for every compact set $B \subset V$ and every plurisubharmonic function $\varphi$ defined in a neighbourhood of $B$ one has $\max_{A \cap B} \varphi \le \max_{A \cap bB} \varphi$.
	\end{enumerate}
	Here $\max_{A \cap bK} \varphi$ is meant to be $-\infty$ if $A \cap bK = \emptyset$.
\end{proposition}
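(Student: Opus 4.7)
The plan is to establish the cycle $(2) \Rightarrow (1) \Rightarrow (3) \Rightarrow (2)$. The implication $(2) \Rightarrow (1)$ is immediate: any $\Phi : \hat H \to V$ witnessing failure of $1$-pseudoconvexity of $V \setminus A$ in $V$ also witnesses failure of $\mathcal M \setminus A$ in $\mathcal M$, since $\Phi(\hat H) \subset V$ is automatic. The equivalence $(1) \Leftrightarrow (3)$ is purely local; shrinking $V$ to a coordinate chart biholomorphic to an open subset of $\mathbb C^n$ reduces both conditions to their $\mathbb C^n$ analogues, where they coincide by S\l odkowski's \cite[Theorems 4.2 and 5.1]{Slodkowski86}. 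The passage to smaller $V$ is justified by the same Hartogs-figure restriction as in $(2) \Rightarrow (1)$.

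The main direction is $(3) \Rightarrow (2)$, which I will argue by contradiction in the spherical-hat formulation of Definition \ref{def 2} (legitimate by the equivalence just established). Assume a failure: there exist $r \in (0,1)$, a neighbourhood $U$ of $\hat{\mathbb S}^n_r$, and an injective holomorphic $\Phi : U \to \mathcal M$ with $\Phi(\mathbb S^n_r) \subset \mathcal M \setminus A$ and $\Phi(\hat{\mathbb S}^n_r) \cap A \neq \emptyset$. Using the continuous family of hats $\{\mathbb S^n_{r, C}\}_{C \geq 0}$ introduced in the proof of the preceding proposition, I set
$$C_0 := \min\{C \geq 0 : \Phi(\mathbb S^n_{r, C}) \cap A \neq \emptyset\},$$
which is attained and strictly positive by compactness of each $\Phi(\mathbb S^n_{r, C})$, closedness of $A$, and the hypothesis $\Phi(\mathbb S^n_r) \cap A = \emptyset$. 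Since the hats for $C \in [0, C_0)$ foliate $\hat{\mathbb S}^n_r \setminus \hat{\mathbb S}^n_{r, C_0}$ and miss $A$, the set $A \cap \Phi(\hat{\mathbb S}^n_r)$ is trapped in $\Phi(\hat{\mathbb S}^n_{r, C_0})$, and I fix a first-contact point $\zeta_0 = \Phi(z_0) \in A \cap \Phi(\mathbb S^n_{r, C_0})$ (with $z_0 \notin \partial \mathbb S^n_r$, so that $\zeta_0$ lies in the open set $\Phi(\mathrm{Int}(\hat{\mathbb S}^n_r))$).

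Near $z_0$ the sphere $\mathbb S^n_{r, C_0}$ is smoothly strictly convex, hence $\Phi(\mathbb S^n_{r, C_0})$ is a smooth strictly pseudoconvex hypersurface near $\zeta_0$ bounding the locally strictly pseudoconvex domain $\Phi(\hat{\mathbb S}^n_{r, C_0})$. By Narasimhan's lemma I pass to local holomorphic coordinates around $\zeta_0$ in which $\Phi(\hat{\mathbb S}^n_{r, C_0})$ corresponds to a strictly convex domain $G' \subset \mathbb C^n$ with $0 \in bG'$ in place of $\zeta_0$. Strict convexity of $bG'$ at $0$ provides a closed ball $\overline{\mathbb B}^n_R(c) \supset \overline{G'}$ (in a local neighbourhood of $0$) with $b\mathbb B^n_R(c) \cap \overline{G'} = \{0\}$. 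The function $|z - c|^2$ is smooth and strictly plurisubharmonic on $\mathbb C^n$, equals $R^2$ at $z = 0$, and is strictly less than $R^2$ elsewhere on $\overline{G'}$ locally. Pulling it back yields a smooth strictly plurisubharmonic function $\tilde\psi$ near $\zeta_0$ attaining a strict local maximum on $\Phi(\hat{\mathbb S}^n_{r, C_0})$, and hence (by the trapping) on $A$, at $\zeta_0$. Choosing a small closed ball $B$ about $\zeta_0$ inside both the domain of $\tilde\psi$ and the neighbourhood $V = V(\zeta_0)$ from (3) then gives $\max_{A \cap B} \tilde\psi = \tilde\psi(\zeta_0) > \max_{A \cap bB} \tilde\psi$, contradicting (3).

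I anticipate the main obstacle to lie in the first-contact step: verifying rigorously that $C_0$ is attained, that the global containment $A \cap \Phi(\hat{\mathbb S}^n_r) \subset \Phi(\hat{\mathbb S}^n_{r, C_0})$ actually holds, and that after Narasimhan straightening the supporting ball $\overline{\mathbb B}^n_R(c)$ traps $\overline{G'}$ on a neighbourhood of $0$ large enough for the strict maximum of $\tilde\psi$ on $A$ near $\zeta_0$ to be genuinely strict rather than vacuous.
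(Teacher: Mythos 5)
The paper itself does not prove this proposition: it refers the reader to \cite[Proposition 3.3]{HarzShcherbinaTomassini14}, so there is no in-paper argument to compare against line by line. Your proposal supplies a genuine proof, and it is assembled from exactly the devices the paper uses elsewhere: the restriction argument for $(2)\Rightarrow(1)$, the appeal to S{\l}odkowski \cite{Slodkowski86} for the purely local chart-level equivalence of $(1)$ and $(3)$, and, for $(3)\Rightarrow(2)$, the first-contact argument with the family ${\mathbb S}^n_{r,C}$ followed by an externally tangent sphere at the contact point --- the same two tricks that appear in the paper's proof of the equivalence of Definitions \ref{def 1} and \ref{def 2} and in Case 1 of the proof of the Main Theorem. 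The trapping $A\cap\Phi({\hat {\mathbb S}}^n_r)\subset\Phi({\hat {\mathbb S}}^n_{r,C_0})$ combined with a supporting ball meeting the straightened domain only at the contact point does produce a strict violation of the maximum inequality, as you intend.

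The one genuine gap is the one you flag yourself: the definition of $C_0$. Each hat ${\mathbb S}^n_{r,C}$ meets the flat face $\{x_1=r\}$ only in ${\partial}{\mathbb S}^n_r$, so if $A\cap\Phi({\hat {\mathbb S}}^n_r)$ happens to lie entirely in $\Phi(\{x_1=r,\ \|z\|<1\})$, then no hat meets $A$ and your minimum is taken over the empty set; compactness and closedness do not rescue this. The repair is the one the paper performs at the very start of its proof of the equivalence of the two definitions: replace $r$ by $r-\varepsilon$ while keeping $U$ and $\Phi$, which is legitimate because ${\mathbb S}^n_{r-\varepsilon}\to{\mathbb S}^n_{r}$ in Hausdorff distance and $A$ is closed, and which moves the nonempty set $A\cap\Phi({\hat {\mathbb S}}^n_{r})$ into $\Phi\big(\{x_1>r-\varepsilon,\ \|z\|<1\}\big)$, where every point lies on some ${\mathbb S}^n_{r-\varepsilon,C}$ with $C>0$. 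With that adjustment your argument closes. Two smaller remarks: the Narasimhan straightening plus one-point tangent ball can be replaced by the shorter perturbation of Case 1 of the Main Theorem --- apply condition $(3)$ to $\psi\circ\Phi^{-1}-\varepsilon\,\|\Phi^{-1}(\cdot)-z_0\|^2$ with $\psi(z)=|z_1+C_0|^2+|z_2|^2+\cdots+|z_n|^2$, which stays plurisubharmonic for small $\varepsilon$ and is strictly smaller than $\psi(z_0)$ on $A\cap bB$ --- thereby sidestepping the question of whether the maximum of the unperturbed function is strict; and since $(3)\Rightarrow(2)\Rightarrow(1)$ is covered by your direct arguments, the only direction you actually need to import from \cite{Slodkowski86} is $(1)\Rightarrow(3)$.
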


A detailed proof of this statement (which follows the ideas of S{\l}odkowski \cite{Slodkowski86}) can be found in a more general setting of $q$-pseudoconcave sets in \cite[Proposition 3.3]{HarzShcherbinaTomassini14}.

The following result was proved in \cite[Theorem 3.1, part 1]{HarzShcherbinaTomassini17}. Since it plays an important role in this paper, we will
present it here in details for the reader convenience in a slightly different form adapted to the current presentation.

\begin{theorem} \label{positive_psh}
There exists a domain $W$ in $\C^n$ with coordinates $(z_1, z_2, \ldots, z_n)$, $z_j = x_j+iy_j$, and a smooth plurisubharmonic function $\varphi : W \to [0 ,+ \infty)$ such that
\begin{enumerate}
	\item[(1)] $\Pi_{-} := \{z \in \C^n : x_1 \leq 0\} \subset W$.
	\item[(2)] $\varphi = 0$ on $\Pi_{-}$.
	\item[(3)] $\varphi > 0$ on $W \setminus \Pi_{-}$.
	\item[(4)] $\varphi$ is strictly plurisubharmonic on $W \setminus \Pi_{-}$.
\end{enumerate}
\end{theorem}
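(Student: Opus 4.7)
My plan is to give an explicit construction. I would take a smooth function $\chi \colon \R \to [0, +\infty)$ that vanishes on $(-\infty, 0]$ and is positive on $(0, +\infty)$; the natural choice is $\chi(t) \coloneqq e^{-1/t}$ for $t > 0$ and $\chi(t) \coloneqq 0$ for $t \leq 0$, which is $C^\infty$ on $\R$ because it is flat at $0$. Writing $z' \coloneqq (z_2, \ldots, z_n)$, I would then set
\[
\varphi(z) \coloneqq \chi(x_1)\bigl(1 + \|z'\|^2\bigr), \qquad W \coloneqq \bigl\{z \in \C^n : x_1 < \tfrac{1}{2(1 + \|z'\|^2)}\bigr\}.
\]
Properties (1)--(3) should then be essentially automatic: $W$ is open and contains $\Pi_{-}$ because $1/(2(1 + \|z'\|^2)) > 0$; $\varphi$ vanishes on $\Pi_{-}$ because $\chi$ does; and $\varphi > 0$ on $W \setminus \Pi_{-} = \{z : 0 < x_1 < 1/(2(1 + \|z'\|^2))\}$ because $\chi(x_1) > 0$ there. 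Smoothness of $\varphi$ follows from that of $\chi$, and $\varphi \geq 0$ since both factors are nonnegative.

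The core of the argument is verifying (4), the strict plurisubharmonicity of $\varphi$ on $W \setminus \Pi_{-}$. I would compute the complex Hessian $H(\varphi)_{jk} = \partial_{\bar z_j}\partial_{z_k}\varphi$ directly. Because $\varphi$ depends on $z_1$ only through $\chi(x_1)$ and on $z'$ only through $\|z'\|^2$, the Hessian has the block form
\[
H(\varphi) = \begin{pmatrix} \tfrac{1}{4}\chi''(x_1)(1 + \|z'\|^2) & \tfrac{1}{2}\chi'(x_1)\,(\bar z_2, \ldots, \bar z_n) \\ \tfrac{1}{2}\chi'(x_1)\,(z_2, \ldots, z_n)^T & \chi(x_1)\,I_{n-1} \end{pmatrix}.
\]
On $W \setminus \Pi_{-}$ we have $\chi(x_1) > 0$, so positive definiteness of $H(\varphi)$ is equivalent, via the Schur complement with respect to the lower-right block, to the scalar inequality $\chi\chi''(1 + \|z'\|^2) > (\chi')^2 \|z'\|^2$. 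Substituting $\chi'(x_1) = \chi(x_1)/x_1^2$ and $\chi''(x_1) = \chi(x_1)(1 - 2x_1)/x_1^4$ and simplifying reduces this to $1 - 2x_1(1 + \|z'\|^2) > 0$, which is precisely the defining inequality of $W$. On $\Pi_{-}$ the Hessian vanishes identically, so $\varphi$ is trivially plurisubharmonic there.

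The key obstacle that dictates this design is that any $C^\infty$ function on $\R$ which vanishes on a left half-line must be log-concave on the interval where it is positive (it cannot be log-convex, since a log-convex positive function does not tend to $0$). This prevents the Schur complement inequality from holding on all of $\{x_1 > 0\} \times \C^{n-1}_{z'}$; it can only hold in a region whose thickness in $x_1$ decays like $1/\|z'\|^2$. The right move is to read off the exact region of validity from the calculation and take $W$ to be exactly that set: $\{x_1 < 1/(2(1+\|z'\|^2))\}$ is open, contains $\Pi_{-}$, and is shaped precisely so that the coupled choice of $\chi$ and the geometry of $W$ yields strict plurisubharmonicity throughout $W \setminus \Pi_{-}$.
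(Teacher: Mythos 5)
Your construction is correct, and it takes a genuinely different route from the paper. The Hessian computation, the Schur-complement reduction (valid since the lower-right block $\chi(x_1)I_{n-1}$ is positive definite for $x_1>0$) to the scalar inequality $\chi\chi''(1+\|z'\|^2)>(\chi')^2\|z'\|^2$, and the verification that for $\chi(t)=e^{-1/t}$ this is exactly the inequality $x_1<\tfrac{1}{2(1+\|z'\|^2)}$ defining $W$ all check out; since $\chi$ is flat at $0$, the Hessian vanishes identically on $\Pi_-$, so $\varphi$ is plurisubharmonic on all of $W$ and strictly so precisely on $W\setminus\Pi_-$, and $W$ is open, connected and contains $\Pi_-$. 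The paper instead builds $\varphi$ as an infinite series $\sum_j\varepsilon_j(\chi_j\circ\psi_j)$ of convex reparametrizations of the strictly plurisubharmonic functions $\psi_j=x_1-2^{2-j}+j^{-2}2^{1-j}(y_1^2+|z_2|^2+\cdots+|z_n|^2)$, one for each ball $\mathbb{B}^n_j(0)$, with $W=\bigcap_j W_j$ and the $\varepsilon_j$ chosen to decay fast enough for smooth convergence. Your single closed formula buys a cleaner argument with no convergence bookkeeping, and your closing remark correctly identifies the real obstruction: a smooth cutoff vanishing on a half-line cannot be log-convex where it is positive, so any such $W$ must pinch toward $\Pi_-$ as $\|z'\|\to\infty$ (though "must be log-concave everywhere it is positive" is slightly overstated; all that is needed is that it cannot be log-convex near $0^+$). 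One practical difference worth flagging: the paper's $W$ contains the unit ball $\mathbb{B}^n_1(0)$, a fact used implicitly in Corollary \ref{hat_psh}, where $\varphi$ is translated by $r$ and restricted to ${\rm Int}({\hat{\mathbb S}}^n_r)$ (the translated points satisfy $\|z\|^2-2rx_1+r^2<1-r^2<1$). Your $W$ does not contain the unit ball --- e.g.\ $(0.9,0.3,0,\ldots,0)$ lies in $\mathbb{B}^n_1(0)$ but violates $x_1<\tfrac{1}{2(1.09)}$ --- so to use your function downstream one should first precompose with a dilation $z\mapsto\lambda z$ for small $\lambda>0$, which enlarges $W$ to contain any prescribed bounded set while preserving all four properties.
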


\begin{proof}
	For every $j \in \N$, let $\psi_j \colon {\mathbb B}^n_j(0) \to \R$ be the smooth and strictly plurisubharmonic function defined by
	\[ \psi_j(z_1, \ldots, z_n) \coloneqq x_1 - \frac{1}{2^{j-2}} + \frac{1}{j^22^{j-1}}\big(y_1^2 + \abs{z_2}^2 + \cdots + \abs{z_n}^2\big). \]
	Choose a smooth function $\chiup_j \colon \R \to [0,\infty)$ such that $\chiup_j \equiv 0$ on $(-\infty, -1/2^j]$ and such that $\chiup_j$ is strictly increasing and strictly convex on $(-1/2^j, \infty)$. Set $\widetilde{\varphi}_j \coloneqq \chiup_j \circ \psi_j$. Then $\widetilde{\varphi}_j$ is a smooth plurisubharmonic function on ${\mathbb B}^n_j(0)$ such that $\widetilde{\varphi}_j \equiv 0$ on $\{\psi_j \le -1/2^j\} \supset {\mathbb B}^n_j(0) \cap \{x_1 \le 1/2^j\}$ and such that $\widetilde{\varphi}_j$ is strictly plurisubharmonic and positive on $\{\psi_j > -1/2^j\} \supset {\mathbb B}^n_j(0) \cap \{x_1 > 3/2^j\}$. Thus
	\[ \varphi_j(z) \coloneqq \left\{\begin{array}{c@{\,,\quad}l} \widetilde{\varphi}_j(z) & z \in {\mathbb B}^n_j(0) \cap \{x_1 \ge 1/2^j\} \\ 0 & z \in \{x_1 < 1/2^j\} \end{array} \right. \]
	is a smooth plurisubharmonic function on $W_j \coloneqq {\mathbb B}^n_j(0) \cup \{x_1 < 1/2^j\}$ such that $\varphi_j$ is strictly plurisubharmonic and positive on ${\mathbb B}^n_j(0) \cap \{x_1 > 3/2^j\}$. Observe that $W \coloneqq \bigcap_{j=1}^\infty W_j$ is a connected open neighbourhood of $\{x_1 \le 0\}$. Then one easily sees that for a sequence $\{\varepsilon_j\}_{j=1}^\infty$ of positive numbers that converges to zero fast enough, the function $\varphi \coloneqq \sum_{j=1}^\infty \varepsilon_j\varphi_j$ is smooth and plurisubharmonic on $W$ such that $\varphi \equiv 0$ on $\{x_1 \le 0\}$ and such that $\varphi$ is strictly plurisubharmonic and positive on $W \cap \{x_1 > 0\}$, which completes the proof of the theorem.
\end{proof}

As a consequence of Theorem \ref{positive_psh} we get the following statement which will be one of the main technical tools in our construction below.

\begin{corollary} \label{hat_psh}
	For every $r \in (0 ,1)$ there is a smooth nonnegative plurisubharmonic function $\varphi_r$ defined on the domain $\Omega_r := {\mathbb C}^n \setminus {\mathbb S}^n_r$ such that
	\begin{enumerate}
		\item[(1)] $\varphi_r$ is equal to $0$ on the set $\Omega_r \setminus {\hat {\mathbb S}}^n_r$.
		\item[(2)] $\varphi_r$ is positive and strictly plurisubharmonic in the interior ${\rm Int}({\hat {\mathbb S}}^n_r)$ of the set ${\hat {\mathbb S}}^n_r$.
	\end{enumerate}
\end{corollary}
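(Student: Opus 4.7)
The plan is to build $\varphi_r$ explicitly by the formula
\[ \varphi_r(z) := \begin{cases} f(x_1 - r)/(1 - \norm{z}^2) & \text{if } \norm{z} < 1, \\ 0 & \text{if } \norm{z} \geq 1, \end{cases} \]
on $\Omega_r$, where $f : \R \to [0, \infty)$ is a smooth convex function that vanishes to all orders at $0$ and satisfies the differential inequality $(f')^2 \leq 2 f f''$. The factor $1/(1 - \norm{z}^2)$ carries the blow-up expected at the removed cap ${\mathbb S}^n_r$, and $f(x_1 - r)$ enforces smooth vanishing across the flat disc $\{x_1 = r, \norm{z} < 1\}$ and across the portion of the sphere lying outside the cap.

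I first produce $f$ using Theorem \ref{positive_psh}. Applying it with $n = 1$ and restricting the resulting $\varphi$ to the real axis yields a smooth convex function $g$ of $x_1$: convexity holds because each $\psi_j$ in that construction reduces on the real axis to an affine function of $x_1$ and each $\chi_j$ is convex nondecreasing. By construction (possibly after rescaling so that $g$ is defined on all of $\R$), one has $g \equiv 0$ on $(-\infty, 0]$, $g > 0$ on $(0, \infty)$, and all derivatives of $g$ vanishing at $0$. Setting $f := g^2$, a short calculation gives
\[ 2 f(t) f''(t) - (f'(t))^2 = 4\, g(t)^3\, g''(t) \geq 0, \]
so $f$ has all the required properties.

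The main technical step is the computation of the complex Hessian of $\varphi_r$ on the interior $\{x_1 > r, \norm{z} < 1\}$ of the hat. Writing $u := 1 - \norm{z}^2$ and $\alpha := \sum_j \bar z_j v_j$ for $v \in \C^n$, a direct calculation yields
\[ \sum_{j,k} (\varphi_r)_{z_j \bar z_k} v_j \bar v_k = \frac{f''}{4u}\abs{v_1}^2 + \frac{f'}{u^2}\,\Re(v_1 \bar\alpha) + \frac{f}{u^2}\abs{v}^2 + \frac{2f}{u^3}\abs{\alpha}^2. \]
The first, second, and fourth terms form a Hermitian quadratic form in $(v_1, \alpha) \in \C^2$ whose determinant equals $(2 f f'' - (f')^2)/(4 u^4) \geq 0$, so that piece is positive semidefinite; the remaining strictly positive term $f \abs{v}^2 / u^2$ (positive whenever $f > 0$, i.e.\ when $x_1 > r$) gives strict positive definiteness of the Levi form on the interior of the hat.

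Global smoothness and plurisubharmonicity on $\Omega_r$ follow from the flatness of $f$ at $0$: the function $\varphi_r$ vanishes identically on the open sets $\{x_1 < r\} \cap \Omega_r$ and $\{\norm{z} > 1\} \cap \Omega_r$, and across the interfaces $\{x_1 = r, \norm{z} < 1\}$ and $\{\norm{z} = 1, x_1 < r\}$ the two pieces of the definition match smoothly with every derivative zero. I expect the principal obstacle to be the Hessian computation combined with arranging the inequality $(f')^2 \leq 2 f f''$; the substitution $f = g^2$ with $g$ obtained from Theorem \ref{positive_psh} is the cleanest route I see.
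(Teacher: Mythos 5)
Your proof is correct, but it takes a genuinely different route from the paper's. The paper's proof is a one-line reduction to Theorem \ref{positive_psh} in its full $n$-dimensional form: it sets $\varphi_r(z) := \varphi(z_1-r, z_2,\ldots,z_n)$ on ${\rm Int}(\hat{\mathbb S}^n_r)$ and $0$ elsewhere on $\Omega_r$, the two key (unstated) observations being that the translated hat interior lands inside the domain $W$ of Theorem \ref{positive_psh} (since $|z_1-r|<|z_1|$ when $x_1>r>0$) and that the infinite-order vanishing of $\varphi$ on $\{x_1\le 0\}$ makes the zero-extension smooth across the flat disc $\{x_1=r,\ \norm{z}<1\}$. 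You instead build the function from scratch as $f(x_1-r)/(1-\norm{z}^2)$, using only a one-variable convex function flat at $0$ with $(f')^2\le 2ff''$, and verify strict plurisubharmonicity by an explicit Levi-form computation; I checked the computation and the $2\times 2$ determinant argument, and both are right (the substitution $f=g^2$ giving $2ff''-(f')^2=4g^3g''\ge 0$ is a clean way to arrange the differential inequality). What your approach buys is self-containedness and transparency: the positivity mechanism is visible (the strictly plurisubharmonic factor $1/(1-\norm{z}^2)$ on the ball carries the Levi form, and the flat convex factor $f(x_1-r)$ handles the gluing), and in fact you do not really need Theorem \ref{positive_psh} at all — any smooth convex $g\ge 0$ vanishing exactly on $(-\infty,0]$, e.g.\ a double primitive of a flat bump, would do. What the paper's approach buys is brevity and uniformity: the whole burden is carried once, in Theorem \ref{positive_psh}, and the corollary is a pure translation argument. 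One small point worth making explicit in your write-up: positive semidefiniteness of the $2\times 2$ Hermitian form in $(v_1,\alpha)$ requires the nonnegativity of both diagonal entries in addition to the determinant condition; this is covered by your hypotheses $f\ge 0$ and $f''\ge 0$ (convexity), but deserves a sentence.
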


\begin{proof}
	If for each $r \in (0 ,1)$ we define the function $\varphi_r$ as
	\[ \varphi_r(z) \coloneqq \left\{ \begin{array}{c@{\,,\quad}l} \varphi((z_1 - r, z_2, \cdots, z_n))  &  {\rm{for}} \,\,  z = (z_1 , z_2, \cdots, z_n) \in {\rm Int}({\hat {\mathbb S}}^n_r), \\ 0 &   {\rm{for}} \,\,  z \in \Omega_r \setminus {\hat {\mathbb S}}^n_r,   \end{array} \right.	\]
	where $\varphi$ is the function constructed in Theorem \ref{positive_psh}, then it is straightforward to see that the function $\varphi_r$ has all the desired properties. 
\end{proof}

\section{Construction and properties of the set $\mathfrak{n}(\mathcal{K})$} 

Let ${\mathcal K}' \supset {\mathcal K}''$ be compact sets in a complex manifold ${\mathcal M}$ of dimension n. We say that the set ${\mathcal K}''$ is obtained from the set ${\mathcal K}'$ by a {\em spherical cut}  if there exist $r \in (0 ,1)$, a neighbourhood $U := U({\hat {\mathbb S}}^n_r) \subset {\mathbb C}^n$ of the filled spherical hat ${\hat {\mathbb S}}^n_r$ and an injective holomorphic mapping $\Phi \colon U \to \mathcal{M}$ such that $\Phi({\mathbb S}^n_r) \subset \mathcal M \setminus {\mathcal K}'$ and ${\mathcal K}' \setminus \Phi({\rm Int}({\hat {\mathbb S}}^n_r)) = {\mathcal K}''$. 

Further, for a pair of compact sets ${\mathcal K}' \supset {\mathcal K}''$ in $\mathcal M$ we say that the set ${\mathcal K}''$ is obtained from the set ${\mathcal K}'$ by a {\em finite sequence of spherical cuts} if there exists a finite decreasing sequence ${\mathcal K}_1 \supset {\mathcal K}_2 \supset \cdots \supset {\mathcal K}_m$ of compact sets in ${\mathcal M}$ such that ${\mathcal K}_1 = {\mathcal K}'$, ${\mathcal K}_m = {\mathcal K}''$ and for each $j = 2, 3, \cdots, m$ the set ${\mathcal K}_j$ is obtained from the set ${\mathcal K}_{j - 1}$ by a spherical cut.

Then, for a given compact set ${\mathcal K}$ in ${\mathcal M}$, we can consider the family ${\mathcal F}_{\mathcal K}$ of compact subsets of ${\mathcal K}$ defined by

\[ {\mathcal F}_{\mathcal K} := \{{\mathcal K}_\alpha : {\mathcal K}_\alpha {\rm{\,\, is \,\, obtained \,\, from \,\, }} {\mathcal K} {\rm{\,\, by \,\, a \,\, finite \,\, sequence \,\, of \,\, spherical \,\, cuts}}\}_{\alpha \in {\mathcal A}}, \]
where ${\mathcal A}$ is a parameter set of this family.

The next statement follows easily from the definition of ${\mathcal F}_{\mathcal K}$.

\begin{lemma} \label{intersection}
	Let ${\mathcal K}_{{\alpha}_1}, {\mathcal K}_{{\alpha}_2}, \cdots , {\mathcal K}_{{\alpha}_m}$ be a finite set of compacts from ${\mathcal F}_{\mathcal K}$, then  one also has that $\bigcap^m_{j=1}{\mathcal K}_{{\alpha}_j} \in {\mathcal F}_{\mathcal K}$.
\end{lemma}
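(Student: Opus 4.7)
The plan is to proceed by induction on $m$, so the heart of the proof is the case $m = 2$. The key observation is that the legality condition of a spherical cut $(\Phi, r)$, namely $\Phi({\mathbb S}^n_r) \subset \mathcal{M} \setminus \mathcal{K}'$, is monotonic: if this cut is valid on a compact set $\mathcal{K}'$, then it remains valid on every compact subset of $\mathcal{K}'$.

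To handle $m = 2$, I would unpack the two representations. Write $\mathcal{K}_{\alpha_1}$ as the terminus of a chain $\mathcal{K} = \mathcal{M}_0 \supset \mathcal{M}_1 \supset \cdots \supset \mathcal{M}_k = \mathcal{K}_{\alpha_1}$ of spherical cuts, and write $\mathcal{K}_{\alpha_2}$ as the terminus of a chain $\mathcal{K} = \mathcal{L}_0 \supset \mathcal{L}_1 \supset \cdots \supset \mathcal{L}_l = \mathcal{K}_{\alpha_2}$ built from cuts with data $(\Psi_j, r_j')$. I would then re-apply the second chain on top of the first: set $\mathcal{N}_0 := \mathcal{K}_{\alpha_1}$ and, for $j = 1, \ldots, l$, define $\mathcal{N}_j := \mathcal{N}_{j-1} \setminus \Psi_j({\rm Int}({\hat {\mathbb S}}^n_{r_j'}))$.

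The main verification is that each step from $\mathcal{N}_{j-1}$ to $\mathcal{N}_j$ is a legitimate spherical cut applied to a compact set. For this, I would prove by induction on $j$ the identity $\mathcal{N}_j = \mathcal{K}_{\alpha_1} \cap \mathcal{L}_j$. Given this identity at stage $j - 1$, the inclusion $\mathcal{N}_{j-1} \subset \mathcal{L}_{j-1}$ together with $\Psi_j({\mathbb S}^n_{r_j'}) \subset \mathcal{M} \setminus \mathcal{L}_{j-1}$ immediately yields $\Psi_j({\mathbb S}^n_{r_j'}) \subset \mathcal{M} \setminus \mathcal{N}_{j-1}$, so the spherical cut is indeed legal; the identity at stage $j$ then follows from a one-line set-theoretic computation. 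Compactness of each $\mathcal{N}_j$ is preserved because $\Psi_j$ is injective holomorphic, hence an open map on ${\rm Int}({\hat {\mathbb S}}^n_{r_j'})$, so one is subtracting an open set from a compact one. Terminating at $j = l$ gives $\mathcal{N}_l = \mathcal{K}_{\alpha_1} \cap \mathcal{K}_{\alpha_2}$, and concatenating the first chain with $\mathcal{N}_0 \supset \cdots \supset \mathcal{N}_l$ exhibits $\mathcal{K}_{\alpha_1} \cap \mathcal{K}_{\alpha_2}$ as the result of a finite sequence of spherical cuts starting from $\mathcal{K}$, placing it in $\mathcal{F}_{\mathcal{K}}$.

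The inductive step from $m - 1$ to $m$ is then immediate: apply the $m = 2$ case to $\bigcap_{j=1}^{m-1}\mathcal{K}_{\alpha_j}$ (which lies in $\mathcal{F}_{\mathcal{K}}$ by the inductive hypothesis) and $\mathcal{K}_{\alpha_m}$. I do not expect any real obstacle here; the argument is essentially bookkeeping, and the only point that warrants explicit mention is the monotonicity observation above, which justifies reusing an existing sequence of cuts on a smaller compact set.
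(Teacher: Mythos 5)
Your proposal is correct and follows essentially the same route as the paper: reduce by induction to the case of two sets, then replay the second chain of spherical cuts starting from $\mathcal{K}_{\alpha_1}$ instead of $\mathcal{K}$, using the monotonicity of the legality condition $\Phi(\mathbb{S}^n_r)\subset\mathcal{M}\setminus\mathcal{K}'$ to see that each cut remains valid and that the resulting sets are $\mathcal{K}_{\alpha_1}\cap\mathcal{L}_j$. Your explicit verification of the identity $\mathcal{N}_j=\mathcal{K}_{\alpha_1}\cap\mathcal{L}_j$ just spells out what the paper leaves implicit.
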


\begin{proof}
	Since the general case by induction can be reduced to the case of two sets ${\mathcal K}_{{\alpha}_1}$ and ${\mathcal K}_{{\alpha}_2}$, we will only treat this case here. 
	
	First, we observe that by the assumption ${\mathcal K}_{{\alpha}_1} \in {\mathcal F}_{\mathcal K}$ we get a finite decreasing sequence ${\mathcal K}'_1 \supset {\mathcal K}'_2 \supset \cdots \supset {\mathcal K}'_{m_1}$ of compact sets in ${\mathcal M}$ such that ${\mathcal K}'_1 = \mathcal K$, ${\mathcal K}'_{m_1} = {\mathcal K}_{{\alpha}_1}$ and for each $j = 2, 3, \cdots, m_1$ the set ${\mathcal K}'_j$ is obtained from the set ${\mathcal K}'_{j - 1}$ by a spherical cut. Similarly, the assumption ${\mathcal K}_{{\alpha}_2} \in {\mathcal F}_{\mathcal K}$ implies that there is a finite decreasing sequence ${\mathcal K}''_1 \supset {\mathcal K}''_2 \supset \cdots \supset {\mathcal K}''_{m_2}$ of compact sets in ${\mathcal M}$ such that ${\mathcal K}''_1 = \mathcal K$, ${\mathcal K}''_{m_2} = {\mathcal K}_{{\alpha}_2}$ and for each $j = 2, 3, \cdots, m_2$ the set ${\mathcal K}''_j$ is obtained from the set ${\mathcal K}''_{j - 1}$ by a spherical cut. If now we consider the spherical cuts corresponding to the first sequence with the initial set ${\mathcal K}$ and then the spherical cuts corresponding to the second sequence, but applied to the initial set ${\mathcal K}_{{\alpha}_1}$, we will get a finite decreasing sequence ${\mathcal K} = {\mathcal K}'_1 \supset {\mathcal K}''_2 \supset \cdots \supset {\mathcal K}'_{m_1} = {\mathcal K}_{{\alpha}_1} = {\mathcal K}_{{\alpha}_1} \cap {\mathcal K} = {\mathcal K}_{{\alpha}_1} \cap {\mathcal K}''_1 \supset {\mathcal K}_{{\alpha}_1} \cap {\mathcal K}''_2 \cdots \supset {\mathcal K}_{{\alpha}_1} \cap {\mathcal K}''_{m_2} = {\mathcal K}_{{\alpha}_1} \cap {\mathcal K}_{{\alpha}_2}$ of compact sets in ${\mathcal M}$ with the property that each set of this sequence is obtained from the previous set by a spherical cut. Since the initial set of this sequence is ${\mathcal K}$ and the final set is ${\mathcal K}_{{\alpha}_1} \cap {\mathcal K}_{{\alpha}_2}$, we conclude that ${\mathcal K}_{{\alpha}_1} \cap {\mathcal K}_{{\alpha}_2} \in {\mathcal F}_{\mathcal K}$. This completes the proof of Lemma \ref{intersection}.
\end{proof}

Now we can define the set $\mathfrak{n}(\mathcal{K})$ which plays a special role in the present article and will be called in what follows the {\it nucleus} of ${\mathcal K}$:
\[
\mathfrak{n}(\mathcal{K}) := \bigcap_{{\mathcal K}_{\alpha} \in {\mathcal F}_{\mathcal K}}{\mathcal K}_{\alpha}. 
\]

The most important for us properties of this set are given in the next statement.

\begin{theorem} \label{K-star}
The set $\mathfrak{n}(\mathcal{K})$ is $1$-pseudoconcave. Moreover, $\mathfrak{n}(\mathcal{K})$ is the maximal $1$-pseudoconcave subset of the set $\mathcal K$.
\end{theorem}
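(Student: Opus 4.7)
The plan has two parts, preceded by an auxiliary observation: for every open neighbourhood $V$ of $\mathfrak{n}(\mathcal{K})$, some $\mathcal{K}_\alpha \in \mathcal{F}_\mathcal{K}$ satisfies $\mathcal{K}_\alpha \subset V$. This follows because $\{\mathcal{K}_\alpha \setminus V\}_\alpha$ is a family of compact sets closed under finite intersection by Lemma~\ref{intersection}, whose total intersection $\mathfrak{n}(\mathcal{K}) \setminus V$ is empty, so the finite intersection property forces some member to be empty.

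For $1$-pseudoconcavity, I would use Definition~\ref{def 2} and argue by contradiction. Suppose there exist $r \in (0,1)$, a neighbourhood $U$ of ${\hat {\mathbb S}}^n_r$, and an injective holomorphic $\Phi \colon U \to \mathcal{M}$ with $\Phi({\mathbb S}^n_r) \cap \mathfrak{n}(\mathcal{K}) = \emptyset$ yet $\zeta_0 := \Phi(z_0) \in \mathfrak{n}(\mathcal{K}) \cap \Phi({\hat {\mathbb S}}^n_r)$ for some $z_0$. Then $z_0 \in {\hat {\mathbb S}}^n_r \setminus {\mathbb S}^n_r = \{\|z\|^2 < 1,\ x_1 \ge r\}$. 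Using uniform continuity of $\Phi$ and the positive distance between the disjoint compact sets $\Phi({\mathbb S}^n_r)$ and $\mathfrak{n}(\mathcal{K})$, I would pick $\delta > 0$ small enough that ${\hat {\mathbb S}}^n_{r-\delta} \subset U$ and $\Phi({\mathbb S}^n_{r-\delta}) \cap \mathfrak{n}(\mathcal{K}) = \emptyset$; by the coordinate conditions above, $z_0$ then lies in the strict interior ${\rm Int}({\hat {\mathbb S}}^n_{r-\delta}) = \{\|z\|^2 < 1,\ x_1 > r - \delta\}$. The auxiliary observation with $V := \mathcal{M} \setminus \Phi({\mathbb S}^n_{r-\delta})$ produces some $\mathcal{K}_\alpha$ with $\Phi({\mathbb S}^n_{r-\delta}) \cap \mathcal{K}_\alpha = \emptyset$, so $\mathcal{K}_\alpha \setminus \Phi({\rm Int}({\hat {\mathbb S}}^n_{r-\delta}))$ is a valid spherical cut of $\mathcal{K}_\alpha$ and hence lies in $\mathcal{F}_\mathcal{K}$; this forces $\mathfrak{n}(\mathcal{K}) \cap \Phi({\rm Int}({\hat {\mathbb S}}^n_{r-\delta})) = \emptyset$, contradicting the location of $\zeta_0$.

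For maximality, I would take any $1$-pseudoconcave $\mathcal{L} \subset \mathcal{K}$ and show $\mathcal{L} \subset \mathcal{K}_\alpha$ for every $\mathcal{K}_\alpha \in \mathcal{F}_\mathcal{K}$ by induction on the length of the defining sequence of cuts: the step passes from $\mathcal{L} \subset \mathcal{K}_j$ to $\mathcal{L} \subset \mathcal{K}_{j+1}$ because, for the cutting map $\Phi$, one has $\Phi({\mathbb S}^n_r) \subset \mathcal{M} \setminus \mathcal{K}_j \subset \mathcal{M} \setminus \mathcal{L}$ and $1$-pseudoconvexity of $\mathcal{M} \setminus \mathcal{L}$ via Definition~\ref{def 2} yields $\Phi({\hat {\mathbb S}}^n_r) \cap \mathcal{L} = \emptyset$. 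Intersecting over $\mathcal{F}_\mathcal{K}$ then gives $\mathcal{L} \subset \mathfrak{n}(\mathcal{K})$. The main obstacle is that a spherical cut removes only the topological interior of ${\hat {\mathbb S}}^n_r$ and leaves the flat disk $\{x_1 = r,\ \|z\|^2 \le 1\}$ intact; a priori $z_0$ might sit on that disk, so a single cut by the given $\Phi$ would not reach it. The $\delta$-enlargement above is exactly what pushes any such boundary point into the strict interior of the enlarged filled hat while continuity of $\Phi$ keeps the new spherical part ${\mathbb S}^n_{r-\delta}$ disjoint from $\mathfrak{n}(\mathcal{K})$.
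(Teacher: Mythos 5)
Your proposal is correct and follows essentially the same route as the paper: the finite-intersection-property argument combined with Lemma \ref{intersection} to place some $\mathcal{K}_\alpha$ inside a neighbourhood avoiding $\Phi(\mathbb{S}^n_r)$, one additional spherical cut to derive the contradiction, and the same induction along the cut sequence for maximality. Your explicit $\delta$-enlargement of the spherical hat is exactly the paper's substitution of $r$ by $r-\varepsilon$, handling the same boundary-disk subtlety.
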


\begin{proof}
     Assume, to get a contradiction, that the set $\mathfrak{n}(\mathcal{K})$ is not $1$-pseudoconcave. Then, in view of Definition \ref{def 2} and Proposition \ref{thm_lmp}, for some $r \in (0 ,1)$ there exist a neighbourhood $U \subset {\mathbb C}^n$ of the filled spherical hat ${\hat {\mathbb S}}^n_r$ and an injective holomorphic mapping $\Phi \colon U \to \mathcal{M}$ such that $\Phi({\mathbb S}^n_r) \subset \mathcal{M} \setminus \mathfrak{n}(\mathcal{K})$ and $\Phi({\hat {\mathbb S}}^n_r) \cap \mathfrak{n}(\mathcal{K}) \neq \emptyset$. Note that, after the substitution of $r$ by $r - \varepsilon$ with $\varepsilon > 0$ small enough if necessary, but keeping the same $U$ and $\Phi$, we can achieve that $\Phi({\mathbb S}^n_r) \subset \mathcal{M} \setminus \mathfrak{n}(\mathcal{K})$ and $\Phi{\big(}{\rm Int}({\hat {\mathbb S}}^n_r){\big)} \cap \mathfrak{n}(\mathcal{K}) \neq \emptyset$. Let $\zeta$ be a point of the set $\Phi{\big(}{\rm Int}({\hat {\mathbb S}}^n_r){\big)} \cap \mathfrak{n}(\mathcal{K})$. Since $\Phi({\mathbb S}^n_r) \subset \mathcal{M} \setminus \mathfrak{n}(\mathcal{K})$, then, in view of compactness of the sets $\Phi({\mathbb S}^n_r)$ and $\mathfrak{n}(\mathcal{K})$, there is a neighbourhood $V$ of $\mathfrak{n}(\mathcal{K})$ such that $\Phi({\mathbb S}^n_r) \cap V = \emptyset$. It follows now from the definition of the set $\mathfrak{n}(\mathcal{K})$ that there is a finite set of compacts ${\mathcal K}_{{\alpha}_1}, {\mathcal K}_{{\alpha}_2}, \cdots , {\mathcal K}_{{\alpha}_m}$ in the family ${\mathcal F}_{\mathcal K}$ such that $\bigcap^m_{j=1}{\mathcal K}_{{\alpha}_j} \subset V$. By Lemma \ref{intersection} we know that the set $\overset{\thicksim}{\mathcal K} := \bigcap^m_{j=1}{\mathcal K}_{{\alpha}_j}$ also belongs to the family ${\mathcal F}_{\mathcal K}$. Since $\overset{\thicksim}{\mathcal K} \subset V$ and $\Phi({\mathbb S}^n_r) \cap V = \emptyset$, we see that $\Phi({\mathbb S}^n_r) \cap \overset{\thicksim}{\mathcal K} = \emptyset$ and, hence, we can make one more spherical cut to obtain the set $\overset{\thicksim}{\mathcal K} \setminus \Phi({\rm Int}({\hat {\mathbb S}}^n_r)) =: \overset{\thickapprox}{\mathcal{K}}$ from the set $\overset{\thicksim}{\mathcal K}$. Observe now that, by construction, the set $\overset{\thickapprox}{\mathcal{K}}$ also belongs to the family ${\mathcal F}_{\mathcal K}$ and, moreover, that $\zeta \notin \overset{\thickapprox}{\mathcal{K}}$. Then, by the definition of the set $\mathfrak{n}(\mathcal{K})$, we know that $\zeta \notin \mathfrak{n}(\mathcal{K})$ which contadicts our choice of the point $\zeta$.

     To prove the maximality of $\mathfrak{n}(\mathcal{K})$ we observe first that, if ${\mathcal K}_2 \subset {\mathcal K}_1$ are compact sets in ${\mathcal M}$ such that ${\mathcal K}_2$ is obtained from ${\mathcal K}_1$ by a spherical cut, and if ${\mathcal K}' \subset {\mathcal K}_1$ is a compact set which is $1$-pseudoconcave, then, by the definition of $1$-pseudoconcave sets, we also have that ${\mathcal K}' \subset {\mathcal K}_2$. Applying this argument to a 
     finite sequence of spherical cuts of the set $\mathcal K$, we see that for an arbitrary $1$-pseudoconcave compact subset ${\mathcal K}'$ of $\mathcal K$ the inclusion ${\mathcal K}' \subset {\mathcal K}_\alpha$ holds true for every ${\mathcal K}_{\alpha} \in {\mathcal F}_{\mathcal K}$. Thus, by the definition of $\mathfrak{n}(\mathcal{K})$, we have that ${\mathcal K}' \subset \bigcap_{{\mathcal K}_{\alpha} \in {\mathcal F}_{\mathcal K}}{\mathcal K}_{\alpha} = \mathfrak{n}(\mathcal{K})$. This completes the proof of Theorem \ref{K-star}.
\end{proof}

\section{Proof of the Main Theorem}

Now we can complete the proof of the Main Theorem. In order to do this we distinguish two cases.
       
\vspace{2mm}
\noindent{{\bf Case 1.} \sl $\mathfrak{n}(\mathcal{K}) \neq \emptyset$.}

\vspace{3mm} In this case we prove that the set $\mathcal K$ does not possess smooth strictly plurisubharmonic functions. Indeed, we argue by contradiction and assume that, in accordance with Definition \ref{def 0}, there is a neighbourhood $\mathfrak{A}$ of $\mathcal{K}$ in $\mathcal{M}$ and a smooth strictly plurisubharmonic function $\varphi$ on $\mathfrak{A}$. Since the set $\mathfrak{n}(\mathcal{K})$ is compact, there is a point $\zeta_0 \in \mathfrak{n}(\mathcal{K})$ such that $\varphi(\zeta_0) = \max_{\zeta \in \mathfrak{n}(\mathcal{K})}\varphi(\zeta)$. Then, in view of strict plurisubharmonicity of $\varphi$, in local coordinates near $\zeta_0$ the function $\varphi(\zeta_0) - \varepsilon \|\zeta - \zeta_0\|^2$ will still be plurisubharmonic for all sufficiently small $\varepsilon$. More precisely, there is a neighbourhood $V \subset \mathcal{M}$ of $\zeta_0$, an injective holomorphic mapping $\Phi \colon V \to \mathbb{C}^n$ which sends the point $\zeta_0$ to the origine and a number $\varepsilon_0 >0$ such that the function $\varphi(\zeta) - \varepsilon \|\Phi(\zeta)\|^2$ is plurisubharmonic on $V$ for all $0 \leq \varepsilon < \varepsilon_0$. Since, by Theorem \ref{K-star}, the set $\mathfrak{n}(\mathcal{K})$ is $1$-pseudoconcave, it follows that if we choose $V$ small enough, then the property (3) of Proposition \ref{thm_lmp} with $\mathfrak{n}(\mathcal{K})$ on the place of $A$ holds true. Applying this property to the function $\varphi(\zeta) - \varepsilon \|\Phi(\zeta)\|^2$ with  some $0 < \varepsilon < \varepsilon_0$ and $B$ being the closure of a small enough neighbourhood of $\zeta_0$, we get, by the choice of $\zeta_0$, that 

\begin{equation*} \begin{split} 
\varphi(\zeta_0) &= \varphi(\zeta_0) - \varepsilon \|\Phi(\zeta_0)\|^2 = \max_{\zeta \in {\mathfrak{n}(\mathcal{K})} \cap B} \varphi(\zeta) - \varepsilon \|\Phi(\zeta_0)\|^2 =\max_{\zeta \in {\mathfrak{n}(\mathcal{K})} \cap B} (\varphi(\zeta) - \varepsilon \|\Phi(\zeta)\|^2) \\ & \leq  \max_{\zeta \in {\mathfrak{n}(\mathcal{K})} \cap bB} (\varphi(\zeta) - \varepsilon \|\Phi(\zeta)\|^2)  < \max_{\zeta \in {\mathfrak{n}(\mathcal{K})} \cap bB} \varphi(\zeta) \leq \max_{\zeta \in {\mathfrak{n}(\mathcal{K})} \cap B} \varphi(\zeta) = \varphi(\zeta_0). 
\end{split} \end{equation*}

\noindent
This gives the desired contradiction in the Case 1.

\vspace{2mm}
\noindent{{\bf Case 2.} \sl $\mathfrak{n}(\mathcal{K}) = \emptyset$.}

In this case we prove that the set $\mathcal K$ possesses a smooth strictly plurisubharmonic function, i.e., there is a neighbourhood $\mathfrak{A}$ of $\mathcal K$ in $\mathcal M$ and a smooth strictly plurisubharmonic function $\varphi$ defined on $\mathfrak{A}$. Indeed, since $\bigcap_{{\mathcal K}_{\alpha} \in {\mathcal F}_{\mathcal K}}{\mathcal K}_{\alpha} = \mathfrak{n}(\mathcal{K}) = \emptyset$, and since all the sets ${\mathcal K}_{\alpha} \in {\mathcal F}_{\mathcal K}$ are compact, one has finitely many compacts ${\mathcal K}_{{\alpha}_1}, {\mathcal K}_{{\alpha}_2}, \cdots , {\mathcal K}_{{\alpha}_m}$ in ${\mathcal F}_{\mathcal K}$ such that $\bigcap^m_{j=1}{\mathcal K}_{{\alpha}_j} = \emptyset$. Hence, in view of Lemma \ref{intersection}, there is a finite sequence of spherical cuts ${\mathcal K}_1 \supset {\mathcal K}_2 \supset \cdots \supset {\mathcal K}_m$ such that ${\mathcal K}_1 = \mathcal K$ and ${\mathcal K}_m = \emptyset$. We use this sequence to construct inductively for each $j = m -1, m - 2, \cdots , 1$ a neighbourhood $\mathfrak{A}_j$ of the compact set ${\mathcal K}_j$ and a smooth strictly plurisubharmonic function $\varphi_j$ defined on the set $\mathfrak{A}_j$. If we will be able to perform this construction, then, in view of the equality $\mathcal K = {\mathcal K}_1$, the function $\varphi := \varphi_1$ defined on $\mathfrak{A} := \mathfrak{A}_1$ will be a function as desired.

In order to make the first step of our construction we consider the set ${\mathcal K}_{m-1}$ and observe that, since ${\mathcal K}_m = \emptyset$, and since the set ${\mathcal K}_m$ is obtained from the set ${\mathcal K}_{m-1}$ by a spherical cut, there exist $r_{m-1} \in (0 ,1)$, a neighbourhood $U_{m-1} := U({\hat {\mathbb S}}^n_{r_{m-1}}) \subset {\mathbb C}^n$ of the filled spherical hat ${\hat {\mathbb S}}^n_{r_{m-1}}$ and an injective holomorphic mapping $\Phi_{m-1} \colon U_{m-1} \to \mathcal{M}$ such that $\Phi_{m-1}({\mathbb S}^n_{r_{m-1}}) \subset \mathcal M \setminus {\mathcal K}_{m-1}$ and ${\mathcal K}_{m-1} \subset \Phi_{m-1}({\rm Int}({\hat {\mathbb S}}^n_{r_{m-1}}))$. If we denote by ${\tilde \varphi}_{m-1}$ the restriction to the set $U_{m-1}$ of the function $\varphi_{r_{m-1}}$ provided by Corollary \ref{hat_psh} with $r = r_{m-1}$, then the function $\varphi_{m-1} := {\tilde \varphi}_{m-1} \circ \Phi^{- 1}_{m-1}$ will be smooth and strictly plurisubharmonic on the neighbourhood $\mathfrak{A}_{m - 1} := \Phi_{m-1}({\rm Int}({\hat {\mathbb S}}^n_{r_{m-1}}))$ of the set ${\mathcal K}_{m-1}$.

Now we proceed with the inductive step of our construction. Assume that we have already constructed a neighbourhood $\mathfrak{A}_j$ of the compact set ${\mathcal K}_j$ and a smooth strictly plurisubharmonic function $\varphi_j$ defined on the set $\mathfrak{A}_j$.  After shrinking the set $\mathfrak{A}_j$ if necessary, we can assume that $\mathfrak{A}_j$ has a smooth boundary and the function $\varphi_j$ is smooth on ${\overline{\mathfrak{A}}}_j$, hence there is a smooth extension, which we denote by $\varphi'_j$, of $\varphi_j$ to the whole of $\mathcal M$. Further, since the set ${\mathcal K}_j$ is obtained from the set ${\mathcal K}_{j-1}$ by a spherical cut, there exist $r_{j-1} \in (0 ,1)$, a neighbourhood $U_{j-1} := U({\hat {\mathbb S}}^n_{r_{j-1}}) \subset {\mathbb C}^n$ of the filled spherical hat ${\hat {\mathbb S}}^n_{r_{j-1}}$ and an injective holomorphic mapping $\Phi_{j-1} \colon U_{j-1} \to \mathcal{M}$ such that $\Phi_{j-1}({\mathbb S}^n_{r_{j-1}}) \subset \mathcal M \setminus {\mathcal K}_{j-1}$ and ${\mathcal K}_{j-1} \setminus \Phi_{j-1}({\rm Int}({\hat {\mathbb S}}^n_{r_{j-1}})) = {\mathcal K}_j$. If we denote by ${\tilde \varphi}_{j-1}$ the restriction to the set $U_{j-1}$ of the function $\varphi_{r_{j-1}}$ provided by Corollary \ref{hat_psh} with $r = r_{j-1}$, then the function ${\tilde \varphi}''_{j-1} := {\tilde \varphi}_{j-1} \circ \Phi^{- 1}_{j-1}$ will be smooth and strictly plurisubharmonic on the neighbourhood $\Phi_{j-1}({\rm Int}({\hat {\mathbb S}}^n_{r_{j-1}}))$ of the set ${\mathcal K}_{j-1} \setminus \mathfrak{A}_j$. Moreover, if we denote by $\varphi''_{j-1}$ the extension of the function ${\tilde \varphi}''_{j-1}$ by zero to the rest of the domain ${\mathcal W}_{j-1} := \mathcal M \setminus \Phi_{j-1}({\mathbb S}^n_{r_{j-1}})$, then, in view of the construction of the function $\varphi_{r_{j-1}}$ (see Corollary \ref{hat_psh}), the function $\varphi''_{j-1}$ will be smooth and plurisubharmonic on ${\mathcal W}_{j-1}$, equal to zero on ${\mathcal W}_{j-1} \setminus \Phi_{j-1}({\rm Int}({\hat {\mathbb S}}^n_{r_{j-1}}))$ and positive and strictly plurisubharmonic on  $\Phi_{j-1}({\rm Int}({\hat {\mathbb S}}^n_{r_{j-1}}))$. Finally, since ${\mathcal K}_{j-1}$ is a subset of the open set $\mathfrak{A}_j \cup \Phi_{j-1}({\rm Int}({\hat {\mathbb S}}^n_{r_{j-1}}))$, we can choose for the set $\mathfrak{A}_{j-1}$ an open neighbourhood of the compact ${\mathcal K}_{j-1}$ such that $\mathfrak{A}_{j-1} \subset\subset \mathfrak{A}_j \cup \Phi_{j-1}({\rm Int}({\hat {\mathbb S}}^n_{r_{j-1}}))$ and then we can define the function $\varphi_{j-1} := \varphi'_j + C \varphi''_{j-1}$ with $C > 0$ being chosen so large that $\varphi_{j-1}$ is strictly plurisubharmonic not only on the set $\mathfrak{A}_{j-1} \cap \mathfrak{A}_j$, where $\varphi'_j$ is strictly plurisubharmonic and $C \varphi''_{j-1}$ is plurisubharmonic, but also on the set $\mathfrak{A}_{j-1} \setminus \mathfrak{A}_j \subset\subset \Phi_{j-1}({\rm Int}({\hat {\mathbb S}}^n_{r_{j-1}}))$, where $\varphi''_{j-1}$ is strictly plurisubharmonic, and hence, for large enough $C$, $\varphi'_j + C \varphi''_{j-1} = \varphi_{j-1}$ also is. This proves our argument by induction and hence also the existence of a smooth strictly plurisubharmonic functions on $\mathcal K$ in the Case 2. The proof of the Main Theorem is now completed.  $\hfill\Box$

Note, that using the same argument as in the Case 2 above (i.e. taking a finite sequence of spherical cuts ${\mathcal K}_1 \supset {\mathcal K}_2 \supset \cdots \supset {\mathcal K}_m$ and then applying Corollary \ref{hat_psh}) in the situation when $\mathfrak{n}(\mathcal{K}) \neq \emptyset$, and choosing as a starting point of our inductive construction the function which is identically equal to zero in a neighbourhood of $\mathfrak{n}(\mathcal{K})$, we can obtain the following result. 

\begin{theorem} \label{star psh}
Let $\mathcal{K}$ be a compact subset of a complex manifold $\mathcal M$ and $\mathcal{V}$ be a neighbourhood of the set $\mathfrak{n}(\mathcal{K})$  in $\mathcal M$. Then there is a neighbourhood $\mathfrak{A}$ of $\mathcal K$ in $\mathcal M$ and a smooth nonnegative plurisubharmonic function $\varphi$ defined on $\mathfrak{A}$ such that: 
\begin{enumerate}
	\item[(1)] $\varphi$ is positive and strictly plurisubharmonic on $\mathfrak{A} \setminus \bar{\mathcal{V}}$.
	\item[(2)] $\varphi|_{\mathfrak{n}(\mathcal{K})} \equiv 0$.
\end{enumerate}
\end{theorem}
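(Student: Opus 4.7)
The plan is to reuse the inductive construction from Case~2 of the Main Theorem, but to seed the induction with the function identically zero on a neighbourhood of $\mathfrak{n}(\mathcal{K})$ rather than with nothing on the empty set. First I would shrink $\mathcal{V}$ to an open set $\mathcal{V}'$ with $\mathfrak{n}(\mathcal{K})\subset\mathcal{V}'$ and $\overline{\mathcal{V}'}\subset\mathcal{V}$. Since $\mathfrak{n}(\mathcal{K})=\bigcap_{\alpha}\mathcal{K}_\alpha\subset\mathcal{V}'$ is an intersection of compact sets, finitely many of the $\mathcal{K}_\alpha$ already intersect inside $\mathcal{V}'$; by Lemma~\ref{intersection} this finite intersection lies in $\mathcal{F}_{\mathcal{K}}$, and unfolding its definition yields a finite chain of spherical cuts $\mathcal{K}=\mathcal{K}_1\supset\cdots\supset\mathcal{K}_m$ with $\mathcal{K}_m\subset\mathcal{V}'$. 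As the base case, take an open neighbourhood $\mathfrak{A}_m\subset\mathcal{V}'$ of $\mathcal{K}_m$ and set $\varphi_m\equiv 0$ on $\mathfrak{A}_m$.

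The inductive step would then be identical to the one in Case~2. Given a smooth nonnegative plurisubharmonic $\varphi_j$ on $\mathfrak{A}_j\supset\mathcal{K}_j$ that is positive and strictly plurisubharmonic on $\mathfrak{A}_j\setminus\overline{\mathcal{V}'}$ and satisfies $\varphi_j|_{\mathfrak{n}(\mathcal{K})}\equiv 0$, extend $\varphi_j$ smoothly to $\varphi'_j$ on $\mathcal{M}$, apply Corollary~\ref{hat_psh} with $r=r_{j-1}$ to obtain the auxiliary $\varphi''_{j-1}$ on $\mathcal{W}_{j-1}:=\mathcal{M}\setminus\Phi_{j-1}({\mathbb S}^n_{r_{j-1}})$ (vanishing off $\Phi_{j-1}({\hat {\mathbb S}}^n_{r_{j-1}})$ and positive and strictly plurisubharmonic on $\Phi_{j-1}({\rm Int}({\hat {\mathbb S}}^n_{r_{j-1}}))$), and define $\varphi_{j-1}:=\varphi'_j+C\varphi''_{j-1}$ on an open neighbourhood $\mathfrak{A}_{j-1}\subset\subset\mathfrak{A}_j\cup\Phi_{j-1}({\rm Int}({\hat {\mathbb S}}^n_{r_{j-1}}))$ of $\mathcal{K}_{j-1}$ for a sufficiently large $C>0$. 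Exactly as in Case~2, large $C$ ensures that $\varphi_{j-1}$ is plurisubharmonic on $\mathfrak{A}_{j-1}$ and strictly plurisubharmonic on $\mathfrak{A}_{j-1}\setminus\overline{\mathcal{V}'}$; nonnegativity on $\mathfrak{A}_{j-1}\setminus\mathfrak{A}_j$ follows because $\varphi''_{j-1}$ is bounded below by a positive constant on that relatively compact set while $\varphi'_j$ is only bounded in absolute value. At $j=1$ the function $\varphi:=\varphi_1$ on $\mathfrak{A}:=\mathfrak{A}_1$ is the one we seek.

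The main obstacle is to propagate the vanishing condition $\varphi_j|_{\mathfrak{n}(\mathcal{K})}\equiv 0$ through the induction. By the maximality of $\mathfrak{n}(\mathcal{K})$ as a $1$-pseudoconcave subset of $\mathcal{K}$ (Theorem~\ref{K-star}), $\mathfrak{n}(\mathcal{K})\subset\mathcal{K}_\alpha$ for every $\mathcal{K}_\alpha\in\mathcal{F}_{\mathcal{K}}$; in particular no spherical cut in the chain removes a point of $\mathfrak{n}(\mathcal{K})$, and hence $\mathfrak{n}(\mathcal{K})$ is disjoint from both $\Phi_{j-1}({\mathbb S}^n_{r_{j-1}})$ and $\Phi_{j-1}({\rm Int}({\hat {\mathbb S}}^n_{r_{j-1}}))$. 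Any point of $\mathfrak{n}(\mathcal{K})\cap\Phi_{j-1}({\hat {\mathbb S}}^n_{r_{j-1}})$ must therefore lie on the flat face $\Phi_{j-1}(\{x_1=r_{j-1}\}\cap{\hat {\mathbb S}}^n_{r_{j-1}})$, where $\varphi''_{j-1}$ vanishes by the construction in Theorem~\ref{positive_psh} (the base function $\varphi$ is zero on $\Pi_-$, so its translate is zero on $\{x_1\leq r_{j-1}\}$). Combined with $\varphi'_j|_{\mathfrak{n}(\mathcal{K})}=\varphi_j|_{\mathfrak{n}(\mathcal{K})}\equiv 0$ (using $\mathfrak{n}(\mathcal{K})\subset\mathcal{K}_j\subset\mathfrak{A}_j$), this yields $\varphi_{j-1}|_{\mathfrak{n}(\mathcal{K})}\equiv 0$ and closes the induction.
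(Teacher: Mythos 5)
Your proposal is correct and follows essentially the same route as the paper, which itself only sketches this proof by saying one should rerun the Case~2 induction seeded with the zero function on a neighbourhood of $\mathfrak{n}(\mathcal{K})$. You have filled in the details the paper leaves implicit --- extracting via compactness and Lemma~\ref{intersection} a finite chain of cuts ending inside $\mathcal{V}'$, and using the maximality of $\mathfrak{n}(\mathcal{K})$ from Theorem~\ref{K-star} to see that every added bump $\varphi''_{j-1}$ vanishes on $\mathfrak{n}(\mathcal{K})$ --- and these details are accurate.
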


\section{Applications and open questions}

Now we will discuss some applications of our results and their connection to the other topics which naturally divides this section by the content into four subsections.

\vspace{0,1cm}
\noindent {\bf {1.\,\,Kobayashi hyperbolicity.}} For the first application we need the next statement which gives a link between Kobayashi hyperbolicity and the existence of bounded strictly plurisubharmonic functions. It is just a slight reformulation of Theorem 3 on p. 362 in \cite{Sibony81}.

\begin{theorem} \label{Sibony} 
Let $M$ be a complex manifold which has a bounded continuous strictly plurisubharmonic function. Then $M$ is Kobayashi hyperbolic.
\end{theorem}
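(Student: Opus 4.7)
The plan is to invoke Sibony's Theorem 3 from \cite{Sibony81}, verifying that his argument carries over under the continuity hypothesis in place of the smoothness originally assumed there. Sibony's strategy reduces Kobayashi hyperbolicity to the following pointwise estimate: for every $p \in M$ there is a chart $W$ around $p$ and a constant $C > 0$ such that $|f'(0)| \le C$ (in the Euclidean metric of $W$) for every holomorphic disc $f \colon \D \to M$ with $f(0) = p$. Such a bound, uniform on compact subsets, is equivalent to positivity of the infinitesimal Kobayashi pseudo-metric at every point and hence implies Kobayashi hyperbolicity.

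The key step is a Riesz-type estimate. Given $f \colon \D \to M$ with $f(0) = p$, the composition $u := \varphi \circ f$ is a bounded subharmonic function on $\D$ with $\|u\|_\infty \le B := \|\varphi\|_\infty$. The Riesz representation for bounded subharmonic functions yields
\[
\int_\D \log\frac{1}{|z|} \, d\mu_u \le 2\pi B,
\]
where $d\mu_u = \frac{1}{2\pi}\Delta u$ is the Riesz measure of $u$. On the other hand, continuous strict plurisubharmonicity of $\varphi$ on the chart $W$ means $i\partial\bar\partial \varphi \ge c \, i\sum_j dz_j \wedge d\bar z_j$ on $W$ in the sense of currents for some $c > 0$; this translates into the pointwise bound $d\mu_u \ge c \, |f'(z)|^2 \, dA$ on the connected component $\Omega$ of $0$ in $f^{-1}(W)$.

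Combining the two estimates gives $\int_\Omega \log(1/|z|) \, |f'(z)|^2 \, dA \le 2\pi B/c$, from which the mean-value inequality applied to the subharmonic function $|f'|^2$ extracts an upper bound on $|f'(0)|$ provided $\Omega$ contains a fixed disc around $0$. The main obstacle is to rule out the possibility that $f$ exits $W$ very quickly, so that $\Omega$ is too small to support the mean-value step. Sibony handles this via a rescaling and normal-families argument that exploits the global subharmonicity of $u$ on all of $\D$ together with the uniform bound $\|u\|_\infty \le B$: if $|f'(0)|$ could be taken arbitrarily large, rescaling $f$ and passing to a local limit produces a holomorphic direction along which $\varphi$ is forced to be locally constant, contradicting the strict inequality $i\partial\bar\partial \varphi \ge c \omega_0$. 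The relaxation from smoothness to continuity enters nowhere, since only the distributional bound $i\partial\bar\partial \varphi \ge c \omega_0$ is used, and this is valid for any continuous strictly plurisubharmonic function.
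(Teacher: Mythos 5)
The paper offers no proof of this statement: it is quoted as a slight reformulation of Theorem 3 of \cite{Sibony81}, the passage from smooth to merely continuous strictly plurisubharmonic functions being harmless (for instance, by Richberg's approximation theorem \cite{Richberg67} a bounded continuous strictly plurisubharmonic function can be replaced by a bounded smooth one lying within distance $1$ of it, after which Sibony's theorem applies verbatim). So the issue is whether your reconstruction of Sibony's argument is sound, and it has a genuine gap exactly at the step you yourself flag as ``the main obstacle''. The Riesz-mass estimate $\int_\D\log(1/|z|)\,d\mu_u\le 2B$ and the lower bound $d\mu_u\ge c\,\|f'\|^2\,dA$ on the component $\Omega$ of $0$ in $f^{-1}(W)$ are both correct, but they bound $|f'(0)|$ only once $\Omega$ is known to contain a disc of uniform radius, and the appeal to ``a rescaling and normal-families argument'' to rule out $f$ exiting $W$ too quickly does not work: in a general (noncompact) complex manifold there is no compact set trapping the rescaled discs, so Montel is unavailable --- normality of families of discs into $M$ is essentially the hyperbolicity one is trying to prove, and Brody-type reparametrization requires compactness of the target. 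This is also not how Sibony closes his argument, so the implicit deferral to \cite{Sibony81} does not cover the missing step; to salvage your route you would need an explicit quantitative lower bound on $\int_{\Omega\cap\D_{1/2}}\|(\psi\circ f)'\|^2\,dA$ (a Lelong-type area estimate for a disc running from the center of a coordinate ball to its boundary) to play off against the upper bound on the weighted Riesz mass, and you neither state nor prove such a bound.

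Sibony's actual proof sidesteps the problem by building a \emph{globally defined} comparison function. Fix a chart $\psi$ centered at $p$ with $\psi(V)\supset\overline{{\mathbb B}^n_1(0)}$, and let $\theta$ be a smooth convex increasing function with $\theta(t)=t$ for $t\le 1/4$ and $\theta\equiv 1$ for $t\ge 3/4$. For $A>0$ large the function $v:=\theta(\|\psi\|^2)\,e^{A(\varphi-\sup_M\varphi)}$ on $V$, extended by $e^{A(\varphi-\sup_M\varphi)}$ off $\psi^{-1}(\overline{{\mathbb B}^n_{3/4}(0)})$, is well defined and log-plurisubharmonic on all of $M$: the uniform current inequality $i\partial\bar\partial\varphi\ge c\,i\sum_j dz_j\wedge d\bar z_j$ on the chart absorbs the bounded negativity of the Levi form of $\log\theta(\|\psi\|^2)$ in the transition annulus, and this is the only place where strictness (and it survives mere continuity of $\varphi$) is used. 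Then $0\le v\le 1$, $v(p)=0$ and $v\ge\delta\|\psi\|^2$ near $p$, so for any holomorphic $f\colon\D\to M$ with $f(0)=p$ the maximum principle applied to the subharmonic function $\log(v\circ f)(\zeta)-2\log|\zeta|$ gives $v(f(\zeta))\le|\zeta|^2$, hence $\|(\psi\circ f)'(0)\|\le\delta^{-1/2}$. This bounds the infinitesimal Kobayashi metric from below by a positive continuous function and yields hyperbolicity with no case analysis on how fast $f$ leaves the chart. I would recommend either citing Sibony together with the Richberg reduction, or writing out this Schwarz-lemma argument in place of the Riesz-measure scheme.
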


As a direct consequence of this statement and our Main Theorem we get the following result.

\begin{theorem} \label{Kobayashi} Let $\mathcal{K}$ be a compact subset of a complex manifold $\mathcal{M}$. If $\mathcal{K}$ does not have 1-pseudoconcave subsets, then there is a neighbourhood $\mathfrak{A}$ of $\mathcal{K}$ in $\mathcal{M}$ which is Kobayashi hyperbolic.
\end{theorem}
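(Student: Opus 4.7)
The plan is to derive Theorem \ref{Kobayashi} as a direct combination of the Main Theorem with Theorem \ref{Sibony}. The only real content lies in promoting the strictly plurisubharmonic function supplied by the Main Theorem, which is merely defined on \emph{some} neighbourhood of $\mathcal K$, to a \emph{bounded} such function on a (possibly smaller) neighbourhood, which is what Sibony's hyperbolicity criterion requires.

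First, I would invoke the Main Theorem: the hypothesis that $\mathcal K$ has no $1$-pseudoconcave subsets gives, by Definition \ref{def 0}, a neighbourhood $\mathfrak A'$ of $\mathcal K$ in $\mathcal M$ together with a smooth strictly plurisubharmonic function $\varphi \colon \mathfrak A' \to \R$. In general $\varphi$ need not be bounded on $\mathfrak A'$, so it cannot be fed directly into Theorem \ref{Sibony}.

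To remedy this I would shrink $\mathfrak A'$. Since $\mathcal K$ is compact and $\mathcal M$ is a Hausdorff manifold with countable basis, hence locally compact, one can choose an open neighbourhood $\mathfrak A$ of $\mathcal K$ with $\mathcal K \subset \mathfrak A \subset\subset \mathfrak A'$ (for instance, cover $\mathcal K$ by finitely many relatively compact coordinate balls whose closures lie in $\mathfrak A'$, and take their union). Then $\overline{\mathfrak A}$ is a compact subset of $\mathfrak A'$, so the continuous function $\varphi$ attains finite maximum and minimum on $\overline{\mathfrak A}$; in particular $\varphi|_{\mathfrak A}$ is a bounded, continuous, smooth, strictly plurisubharmonic function on $\mathfrak A$.

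Finally, applying Theorem \ref{Sibony} with $M = \mathfrak A$ yields that $\mathfrak A$ is Kobayashi hyperbolic, which is the desired conclusion. There is no substantive obstacle here: the argument is a two-line corollary once both the Main Theorem and Sibony's criterion are in hand, with the only (routine) issue being the passage to a relatively compact subneighbourhood to secure boundedness.
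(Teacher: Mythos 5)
Your proof is correct and follows exactly the route the paper intends (the paper states the theorem as a direct consequence of the Main Theorem and Theorem \ref{Sibony} without writing out the argument). The only detail you add — shrinking to a relatively compact subneighbourhood to make the strictly plurisubharmonic function bounded before invoking Sibony's criterion — is precisely the routine step needed to make the deduction rigorous.
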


\noindent {\bf {2.\,\,The core of a compact.}} The next topic, which is naturally connected to the content of this paper, is related to the notion of the {\it core} of a complex manifold. This notion was introduced and systematically studied by Harz-Shcherbina-Tomassini in \cite{HarzShcherbinaTomassini14} - \cite{HarzShcherbinaTomassini19}. Further results on the foliated structure of the core were obtained by Poletsky-Shcherbina in \cite{PoletskyShcherbina19} and S{\l}odkowski in \cite{Slodkowski19}.

Recall first the definition of the core of a  manifold which was given in \cite{HarzShcherbinaTomassini14} - \cite{HarzShcherbinaTomassini17}:

\begin{definition}  \label{core} Let $\mathcal{M}$ be a complex manifold. Then the set
\begin{equation*}\begin{split}
\mathfrak{c}(\mathcal{M}) \coloneqq \big\{&\zeta \in \mathcal{M} : \text{every smooth plurisubharmonic function on $\mathcal{M}$ that is} \\& \text{bounded from above fails to be strictly plurisubharmonic in } \zeta \big\}
\end{split}\end{equation*}
is called the \emph{core} of $\mathcal{M}$.
\end{definition}

In the same vein we can define a notion of the core in the setting of the present paper.

\begin{definition}  \label{K core} Let $\mathcal{K}$ be a compact subset of a complex manifold $\mathcal{M}$. Then the set
	\begin{equation*}\begin{split}
	\mathfrak{c}(\mathcal{K}) \coloneqq \big\{&\zeta \in \mathcal{K} : \text{every function which is smooth and plurisubharmonic on a neighbour-} \\& \text{hood of $\mathcal{K}$ in $\mathcal{M}$ fails to be strictly plurisubharmonic in } \zeta \big\}
	\end{split}\end{equation*}
	is called the \emph{core} of $\mathcal{K}$.
\end{definition}

This definition obviously implies that the set $\mathfrak{c}(\mathcal{K})$ is compact. Since, by Theorem \ref{star psh}, for each point $\zeta \notin \mathfrak{n}(\mathcal{K})$ there is a smooth plurisubharmonic function defined in a neighbourhood of $\mathcal{K}$ which is strictly plurisubharmonic in $\zeta$, the following property holds true:

\begin{theorem} \label{star core}
	Let $\mathcal{K}$ be a compact subset of a complex manifold $\mathcal M$. Then $\mathfrak{c}(\mathcal{K}) \subset \mathfrak{n}(\mathcal{K})$.
\end{theorem}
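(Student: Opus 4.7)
The plan is to prove the contrapositive: if $\zeta \in \mathcal{K} \setminus \mathfrak{n}(\mathcal{K})$, then $\zeta \notin \mathfrak{c}(\mathcal{K})$, i.e., we can exhibit a smooth plurisubharmonic function in a neighbourhood of $\mathcal{K}$ which is strictly plurisubharmonic at $\zeta$.

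First I would use compactness of $\mathfrak{n}(\mathcal{K})$ together with $\zeta \notin \mathfrak{n}(\mathcal{K})$ to choose an open neighbourhood $\mathcal{V}$ of $\mathfrak{n}(\mathcal{K})$ in $\mathcal{M}$ whose closure $\bar{\mathcal{V}}$ does not contain $\zeta$ (for instance by taking a small enough tubular neighbourhood of $\mathfrak{n}(\mathcal{K})$ in local coordinates, or by using Hausdorff separation of $\{\zeta\}$ from the compact set $\mathfrak{n}(\mathcal{K})$). This is the only mildly non-trivial point in the argument, and it just needs Hausdorff regularity of the ambient manifold.

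Next I would invoke Theorem \ref{star psh} with this particular $\mathcal{V}$. That theorem produces a neighbourhood $\mathfrak{A}$ of $\mathcal{K}$ in $\mathcal{M}$ and a smooth nonnegative plurisubharmonic function $\varphi$ on $\mathfrak{A}$ which is strictly plurisubharmonic on $\mathfrak{A} \setminus \bar{\mathcal{V}}$ (and vanishes on $\mathfrak{n}(\mathcal{K})$, though we do not need the latter here). Since $\zeta \in \mathcal{K} \subset \mathfrak{A}$ and $\zeta \notin \bar{\mathcal{V}}$, we have $\zeta \in \mathfrak{A} \setminus \bar{\mathcal{V}}$, so $\varphi$ is strictly plurisubharmonic at $\zeta$.

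By Definition \ref{K core}, the existence of such a $\varphi$ means exactly $\zeta \notin \mathfrak{c}(\mathcal{K})$. Contrapositively, $\mathfrak{c}(\mathcal{K}) \subset \mathfrak{n}(\mathcal{K})$, which is the claim. The whole argument is essentially a one-line consequence of Theorem \ref{star psh}; there is no substantial obstacle once that theorem is in hand, so the proof should be very short.
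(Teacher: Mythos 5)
Your proposal is correct and is essentially the paper's own argument: the author also deduces the theorem directly from Theorem \ref{star psh}, noting that for each $\zeta \notin \mathfrak{n}(\mathcal{K})$ that theorem yields a smooth plurisubharmonic function near $\mathcal{K}$ that is strictly plurisubharmonic at $\zeta$, whence $\zeta \notin \mathfrak{c}(\mathcal{K})$. Your explicit separation of $\zeta$ from the compact set $\mathfrak{n}(\mathcal{K})$ by a neighbourhood $\mathcal{V}$ with $\zeta \notin \bar{\mathcal{V}}$ just makes precise the step the paper leaves implicit.
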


We do not know if the reverse statement holds true or not:

\vspace{0,3cm}
\noindent
{\bf Question 1.} {Let $\mathcal{K}$ be a compact subset of a complex manifold $\mathcal M$. Is it always true that $\mathfrak{n}(\mathcal{K}) \subset \mathfrak{c}(\mathcal{K})$?}

\vspace{0,1cm}
One of the most important properties of the core $\mathfrak{c}(\mathcal{M})$, proved in \cite{HarzShcherbinaTomassini17}, is its $1$-pseudoconcavity. The other crucial property of $\mathfrak{c}(\mathcal{M})$, proved in \cite{HarzShcherbinaTomassini18} for manifolds of dimension 2  and in \cite{PoletskyShcherbina19} and \cite{Slodkowski19} for the general case, claims that $\mathfrak{c}(\mathcal{M})$ can be decomposed as a disjoint union of pseudoconcave sets such that every smooth bounded plurisubharmonic function on $\mathcal{M}$ is constant on each of these sets. We do not know if similar statements for the core $\mathfrak{c}(\mathcal{K})$ hold true or not:

\vspace{0,3cm}
\noindent
{\bf Question 2.} {Let $\mathcal{K}$ be a compact subset of a complex manifold $\mathcal M$. Is it always true that $\mathfrak{c}(\mathcal{K})$ is $1$-pseudoconcave?}

\vspace{0,3cm}
\noindent
{\bf Question 3.} {Let $\mathcal{K}$ be a compact subset of a complex manifold $\mathcal M$. Is it always true that $\mathfrak{c}(\mathcal{K})$ can be decomposed as a disjoint union of $1$-pseudoconcave sets $\{E_\alpha\}_{\alpha \in \mathcal A}$ such that $\varphi|_{E_\alpha} \equiv const$ for each $\alpha \in \mathcal A$?}

\vspace{0,2cm}
\noindent {\bf {3.\,\,On the structure of the nucleus.}} Part (3) of Proposition \ref{thm_lmp} above suggests that $1$-pseudoconcave sets resemble in a sense complex analytic varieties. For this reason one can ask a question if such sets and, in particular, in view of Theorem \ref{K-star}, the nucleus $\mathfrak{n}(\mathcal{K})$ of a given compact set $\mathcal{K}$ in a complex manifold $\mathcal{M}$, will have some kind of analytic structure. The answer to this question is, in general, negative even if the dimension of $\mathcal{M}$ is equal to $2$. A corresponding example can be obtained if we choose for $\mathcal{M}$ the $2$-dimensional complex projective space ${\mathbb P}^2$ with the projective coordinates $[z:w:\zeta]$ and for $\mathcal{K}$ the compactification by the point $[1:0:0]$ of the Wermer type set $\mathpzc{E} \subset {\mathbb C}^2_{z, w} \subset {\mathbb P}^2_{z, w, \zeta}$ constructed in \cite{HarzShcherbinaTomassini12}. In this case, by the properties of $\mathpzc{E}$ established in Theorem 1.1 of \cite{HarzShcherbinaTomassini12}, the set $\mathcal{K}$ is $1$-pseudoconcave, hence, $\mathfrak{n}(\mathcal{K}) = \mathcal{K}$, and, moreover, it has no analytic subsets of positive dimension. 

Since the answer to the question above is, in general, negative, one can restrict the question to the case when the compact set $\mathcal{K}$ has more structure, for example, to the case when it is a smooth real hypersurface in $\mathcal{M}$. It is not difficult to see that even in this case the nucleus $\mathfrak{n}(\mathcal{K})$ of $\mathcal{K}$ does not need to contain any analytic subvarieties. Indeed, if we choose the set $\mathcal{K}$ to be a $C^2$-small generic perturbation of the surface $\{[z_1:z_2:z_3:z_4] \in {\mathbb P}^3 : |z_1|^2 + |z_2|^2 - |z_3|^2 - |z_4|^2 = 0\}$ in ${\mathbb P}^3$, then the Levi form of $\mathcal{K}$ has the signature $(1, 1)$, hence $\mathcal{K}$ is $1$-pseudoconcave. However, by genericity of the perturbation, we see that $\mathcal{K}$ can not have any holomorphic disc inside, even locally.

Surprisingly, in the case when the dimension of $\mathcal{M}$ is equal to $2$ and $\mathcal{K}$ is a hypersurface (even not necessarily smooth) in $\mathcal{M}$, the set $\mathfrak{n}(\mathcal{K})$ will have a very special structure:

\begin{theorem} \label{Leaves} 
	Let $\mathcal{M}$ be a complex manifold of dimension $2$ and let $\mathcal{K}$ be a continuous hypersurface of the graph type in $\mathcal{M}$. Then locally the set $\mathfrak{n}(\mathcal{K})$ is a disjoint union of holomorphic discs.
\end{theorem}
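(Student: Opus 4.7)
The proof is local. Fix $\zeta_0 \in \mathfrak{n}(\mathcal{K})$ and choose holomorphic coordinates so that in a neighbourhood of $\zeta_0$, $\mathcal{M} = \mathbb{C}^2$ with coordinates $(z, w)$, $w = u + iv$, and $\mathcal{K}$ is presented as a graph $\{v = h(z, u)\}$ for a continuous function $h$ on an open set in $\mathbb{C}_z \times \mathbb{R}_u$. The map $\pi(z, w) \coloneqq (z, u)$ then restricts to a homeomorphism of $\mathcal{K}$ onto $\pi(\mathcal{K})$, so the $w$-projection of $\mathcal{K}$ is uniformly bounded on compact subsets. By Theorem~\ref{K-star}, $\mathfrak{n}(\mathcal{K})$ is $1$-pseudoconcave, and by Proposition~\ref{thm_lmp}(3) it satisfies the local maximum property for plurisubharmonic functions.

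The strategy is to establish two statements for every $\zeta = (\tilde z, \tilde w) \in \mathfrak{n}(\mathcal{K})$ close to $\zeta_0$: (a) there passes a non-trivial holomorphic disc $\Delta_\zeta$ through $\zeta$ contained in $\mathfrak{n}(\mathcal{K})$, and (b) any two such discs either coincide on a neighbourhood of a common interior point or are disjoint. Together (a) and (b) yield the claimed decomposition of $\mathfrak{n}(\mathcal{K})$ into disjoint holomorphic discs.

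For step (a) the plan is as follows. For each small $r > 0$ look for a holomorphic function $F \colon \overline{D_r(\tilde z)} \to \mathbb{C}$ with $F(\tilde z) = \tilde w$ whose graph lies in $\mathcal{K}$, equivalently satisfying the nonlinear Riemann--Hilbert condition $\Im F = h(\cdot\,, \Re F)$ on $|z - \tilde z| = r$. I would extract existence of such an $F$ from the local maximum property of $\mathfrak{n}(\mathcal{K})$ by a polynomial hull argument in the spirit of Shcherbina's theorem on hulls of graphs of continuous functions in $\mathbb{C}^2$: the absence of such an $F$ through $\zeta$, combined with the graph hypothesis on $\mathcal{K}$, would allow one to construct a smooth plurisubharmonic function in a neighbourhood of $\zeta$ attaining a strict maximum on $\mathfrak{n}(\mathcal{K})$ at $\zeta$, contradicting Proposition~\ref{thm_lmp}(3). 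Once existence is secured for all sufficiently small $r$, Montel's theorem (applicable by local $w$-boundedness of $\mathcal{K}$) produces a non-constant limit disc $\Delta_\zeta \subset \mathcal{K}$ through $\zeta$. Since every closed holomorphic disc in $\mathcal{K}$ is $1$-pseudoconcave (by the subharmonic maximum principle, verified via Proposition~\ref{thm_lmp}(3)), the maximality part of Theorem~\ref{K-star} then forces $\Delta_\zeta \subset \mathfrak{n}(\mathcal{K})$.

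Step (b) is an easy application of the identity theorem: two discs meeting at an interior point are locally graphs of holomorphic functions $F_1$, $F_2$ over a neighbourhood of the common $z$-projection, and since $\pi$ is injective on $\mathcal{K}$ and both graphs lie in $\mathcal{K}$, one has $\pi \circ F_1 = \pi \circ F_2$ there, so $F_1 = F_2$ throughout the overlap. The real difficulty of the proof lies entirely in step (a): because $h$ is only continuous, classical $\bar\partial$-techniques for constructing analytic discs with prescribed boundary conditions do not apply, and one must combine the graph structure of $\mathcal{K}$ with the local maximum property of $\mathfrak{n}(\mathcal{K})$ via a continuous-coefficient Riemann--Hilbert/polynomial hull argument to produce the desired disc.
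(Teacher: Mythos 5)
Your overall plan (find a disc through each point, then show the discs are disjoint) is reasonable in outline, but both halves have genuine gaps, and the most serious one is the final step of (a). You claim that a closed holomorphic disc in $\mathcal{K}$ is $1$-pseudoconcave and then invoke the maximality part of Theorem~\ref{K-star} to force $\Delta_\zeta \subset \mathfrak{n}(\mathcal{K})$. This fails: the disc you produce is the graph of $F$ over a \emph{closed} disc $\overline{D_r(\tilde z)}$, i.e.\ a compact disc with boundary, and such a set is not $1$-pseudoconcave --- at a boundary point the function $\Re\big(e^{i\theta}(z-\tilde z)\big)$ (in suitable coordinates) has a strict local maximum, violating part (3) of Proposition~\ref{thm_lmp}. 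A disc that is merely closed in a coordinate chart $\Omega$ is $1$-pseudoconcave \emph{in} $\Omega$ but is not a compact $1$-pseudoconcave subset of $\mathcal{M}$, so the maximality statement of Theorem~\ref{K-star} (which concerns compact $1$-pseudoconcave subsets of $\mathcal{K}$) simply does not apply. The possibility that a leaf meets $\mathfrak{n}(\mathcal{K})$ in a proper nonempty subset is exactly the delicate point; the paper rules it out by a separate argument (its ``Claim''), replacing $h$ by a continuous $h^\ast \ge h$ with $h^\ast = h$ on $\pi_{z,u}(\mathfrak{n}(\mathcal{K})\cap\Omega)$ and $h^\ast > h$ on the projection of the escaping part of the leaf, and observing that the leaf then falls into the envelope of holomorphy of the lower side of $\Gamma_{h^\ast}$, hence into the complement of $\mathfrak{n}(\mathcal{K})$. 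Nothing in your proposal substitutes for this step.

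Two further points. First, your step (a) existence argument is only a programme: ``the absence of such an $F$ \dots would allow one to construct a smooth plurisubharmonic function \dots attaining a strict maximum'' is essentially a request to re-prove the main theorem of \cite{Shcherbina93} from scratch via a continuous Riemann--Hilbert problem. The paper avoids this by a cleaner mechanism: since $\dim \mathcal{M}=2$ and $\mathfrak{n}(\mathcal{K})$ is $1$-pseudoconcave, the complement $\mathcal{W}=\Omega\setminus\mathfrak{n}(\mathcal{K})$ is pseudoconvex, hence contains the single-sheeted envelopes of holomorphy $\Omega_\ominus$, $\Omega_\oplus$ of the two sides of the graph; this sandwiches $\mathfrak{n}(\mathcal{K})\cap\Omega$ inside $b\Omega_\ominus\cap b\Omega_\oplus\cap\Omega$, to which the foliation theorem of \cite{Shcherbina93} (or \cite{Chirka01}) applies directly. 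Second, your step (b) is a non sequitur: injectivity of $\pi_{z,u}$ on $\mathcal{K}$ says that two points of $\mathcal{K}$ with the same $(z,u)$ coincide, but two discs $w=F_1(z)$, $w=F_2(z)$ in $\mathcal{K}$ meeting at $z_0$ need not satisfy $\Re F_1=\Re F_2$ near $z_0$, so you cannot conclude $F_1\equiv F_2$ from the identity theorem this way; the disjointness of the leaves is part of the content of \cite{Shcherbina93}, not an elementary consequence of the graph condition.
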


Here $\mathcal{K}$ {\it is a continuous hypersurface of the graph type in} $\mathcal{M}$ means that for every point $\zeta \in \mathcal{K}$ there is a neighbourhood $\Omega$ in $\mathcal{M}$ and local holomorphic coordinates $(z, w)$ in $\Omega$ such that $\mathcal{K} \cap \Omega = \{(z, w) \in B^3_r(0) \times {\mathbb R}_v : v = h(z, u)\} =: \Gamma_h$ -- the graph of a continuous function $h : B^3_r(0) \to {\mathbb R}_v$, where $B^3_r(0) := \{(z, u) \in {\mathbb C}_z \times {\mathbb R}_u : |z|^2 + u^2 < r^2\}$ and  $w = u + i v$.

\begin{proof}
    Let $\zeta$ be a point of $\mathfrak{n}(\mathcal{K})$. Since, by our assumptions, $\mathcal{K}$ is a continuous hypersurface of the graph type in $\mathcal{M}$, we can choose a neighbourhood $\Omega$ of $\zeta$ as described above. Moreover, without restriction of generality we can assume, maybe after shrinking $\Omega$ if necessary, that in local holomorphic coordinates $(z, w)$ the set $\Omega$ has the form $\Omega = B^3_r(0) \times (- a, a) \subset B^3_r(0) \times {\mathbb R}_v \subset {\mathbb C}^2_{z,w}$ for some $r, a >0$. Since, by Theorem \ref{K-star}, the set $\mathfrak{n}(\mathcal{K})$ is $1$-pseudoconcave, and since, by part (1) of Proposition \ref{thm_lmp}, $1$-pseudoconcavity is a local property, and taking into account that the statement of the theorem also has a local nature, it will be enough to restrict our consideration to the set $\Omega$.
    
    Consider now the domains $\Omega _{-} := \{(z, w) \in \Omega : v < h(z, u)\}$ and $\Omega _{+} := \{(z, w) \in \Omega : v > h(z, u)\}$ and note that the hulls of holomorphy of these domains are single-sheeted and have the form $\Omega _{\ominus} := \{(z, w) \in \Omega : v < h_{-}(z, u)\}$ and $\Omega _{\oplus} := \{(z, w) \in \Omega : v > h_{+}(z, u)\}$, respectively, where $h_{-} \geq h$ is upper semicontinuous and $h_{+} \leq h$ is lower semicontinuous in $B^3_r(0)$ (the proof of this rather elementary fact can be found, for example, in Lemma 1 of \cite{Chirka73}). Since $\mathfrak{n}(\mathcal{K}) \cap \Omega$ is $1$-pseudoconcave, and since the dimension of $\mathcal{M}$ is equal to $2$, we conclude that the domain $\mathcal W := \Omega \setminus \mathfrak{n}(\mathcal{K})$ is pseudoconvex. The inclusion $\mathfrak{n}(\mathcal{K}) \subset \mathcal{K}$ implies that $\Omega _{-} \subset \mathcal W$ and $\Omega _{+} \subset \mathcal W$ and, hence, by pseudoconvexity of $\mathcal W$, that $\Omega _{\ominus} \subset \mathcal W$ and $\Omega _{\oplus} \subset \mathcal W$. Therefore, one also has the inclusion 

    \begin{equation}\label{eq 1}
    \mathfrak{n}(\mathcal{K}) \cap \Omega \subset b\Omega _{\ominus} \cap b\Omega _{\oplus} \cap \Omega =: E.
    \end{equation}

    \noindent
    The results of \cite{Shcherbina93} (see also Theorem 1 in \cite{Chirka01}) tell us now that the set $E = \bigcupdot_{\alpha \in \mathcal A} E_{\alpha}$ is the disjoint union of complex analytic discs $\{E_{\alpha}\}_{\alpha \in \mathcal A}$ which are closed in $\Omega$. Hence, to finish the proof of the theorem it is enough to prove the following statement:
    
    \vspace{0,2cm}
    \noindent
    {\bf Claim.} {\it If for some $\alpha_0 \in \mathcal A$ one has that $E_{\alpha_0} \setminus \mathfrak{n}(\mathcal{K}) \neq \emptyset$, then $E_{\alpha_0} \cap \mathfrak{n}(\mathcal{K}) = \emptyset$.}

    \vspace{0,2cm}
    \noindent
    This Claim, in view of inclusion \eqref{eq 1}, will imply that $\mathfrak{n}(\mathcal{K}) \cap \Omega = \bigcupdot_{\alpha \in \mathcal B} E_{\alpha}$ for some subset $\mathcal B$ of $\mathcal A$ and, hence, will prove the theorem.

    \vspace{0,2cm}
    \noindent
    {\bf Proof of the Claim.} If $E_{\alpha_0} \setminus \mathfrak{n}(\mathcal{K}) \neq \emptyset$, then we can take a continuous function $h^\ast : B^3_r(0) \to {\mathbb R}_v$ such that $h^\ast \geq h$ on $B^3_r(0)$, $h^\ast = h$ on $\pi_{z,u}(\mathfrak{n}(\mathcal{K}) \cap \Omega)$ and $h^\ast > h$ on $\pi_{z,u}(E_{\alpha_0} \setminus \mathfrak{n}(\mathcal{K}))$, where $\pi_{z,u} : {\mathbb C}^2_{z,w} \to {\mathbb C}_z \times {\mathbb R}_u$ is the canonical projection, and then, as above, we consider the domains $\Omega^\ast _{-} := \{(z, w) \in \Omega : v < h^\ast(z, u)\}$ and $\Omega^\ast _{+} := \{(z, w) \in \Omega : v > h^\ast(z, u)\}$ and their hulls of holomorphy which have the form $\Omega^\ast_{\ominus} := \{(z, w) \in \Omega : v < h^\ast_{-}(z, u)\}$ and $\Omega^\ast_{\oplus} := \{(z, w) \in \Omega : v > h^\ast_{+}(z, u)\}$, respectively, where $h^\ast_{-} \geq h^\ast$ is upper semicontinuous and $h^\ast_{+} \leq h^\ast$ is lower semicontinuous in $B^3_r(0)$. Since $h^\ast = h$ on  $\pi_{z,u}(\mathfrak{n}(\mathcal{K}) \cap \Omega)$, we have that $\mathfrak{n}(\mathcal{K}) \cap \Omega  \subset \Gamma_{h^\ast}$, and then, by pseudoconcavity of $\mathfrak{n}(\mathcal{K})$, we can conclude again from the results of \cite{Shcherbina93} that

    \[
    \mathfrak{n}(\mathcal{K}) \cap \Omega \subset b\Omega^\ast _{\ominus} \cap b\Omega^\ast _{\oplus} \cap \Omega =: E^\ast,
    \]
    where $E^\ast = \bigcupdot_{\alpha \in {\mathcal A}^\ast} E^\ast_{\alpha}$ is the disjoint union of complex analytic discs which are closed in $\Omega$. Now, the property $h^\ast > h$ on $\pi_{z,u}(E_{\alpha_0} \setminus \mathfrak{n}(\mathcal{K})) \neq \emptyset$ tells us that the disc $E_{\alpha_0}$ does not belong to the family $\{E^\ast_{\alpha}\}_{\alpha \in {\mathcal A}^\ast}$, which implies that $E_{\alpha_0} \subset \Omega^\ast_{\ominus}  \subset \mathcal W$ and, hence, also that $E_{\alpha_0} \cap \mathfrak{n}(\mathcal{K}) = \emptyset$. This proves the Claim and completes the proof of Theorem \ref{Leaves}. 
\end{proof}

\vspace{0,2cm}
\noindent 
{\bf Remark.}
	Note, that in the case when the dimension of $\mathcal{M}$ is larger than $2$, a statement analogous to the Theorem \ref{Leaves} is not known and very difficult to get even for $\mathcal{K}$ being the boundary of a smooth pseudoconvex domain $\mathcal D \subset \mathcal{M}$. The problems here are related, in particular, to the jump of the rank of the Levi form and absence of a version of Frobenius theorem for distributions of varying dimension. This kind of difficulties is also present in the following, slightly reformulated here, old problem of Rossi \cite[Conjecture 5.12 on p. 489]{Rossi61} which is, to the best of our knowledge, still open:

\noindent
{\bf Conjecture.}	
	{\it Let $\mathcal D$ be a pseudoconvex domain with smooth boundary in a complex manifold $\mathcal{M}$ of dimension at least $3$. Let $\mathfrak{B}$ be the set of all weakly pseudoconvex points in $b{\mathcal D}$  and ${\rm Int}({\mathfrak{B}})$ is the interior of $\mathfrak{B}$ in $b{\mathcal D}$. Then for each point $\zeta \in {\rm Int}({\mathfrak{B}})$ there is a variety $\mathcal V \subset b{\mathcal D}$ of dimension at least one passing through the point $\zeta$.}

\vspace{3mm}
\noindent

%
%
 \vspace{1truecm}

%
%
%
{\sc N. Shcherbina: Department of Mathematics, University of Wuppertal --- 42119 Wuppertal, Germany}
  
{\em e-mail address}: {\texttt shcherbina@math.uni-wuppertal.de}

\end{document}